\documentclass[11pt,a4paper]{article}
\usepackage[T1]{fontenc}
\usepackage[utf8]{inputenc}
\usepackage{lmodern,amsmath,amssymb,amsthm,booktabs,array}
\usepackage[margin=27mm]{geometry}
\usepackage[colorlinks=true,linkcolor=blue,urlcolor=blue,citecolor=blue]{hyperref}
\usepackage{microtype}
\newtheorem{theorem}{Theorem}[section]
\newtheorem{lemma}[theorem]{Lemma}

\newtheorem{corollary}[theorem]{Corollary}
\newtheorem{conjecture}[theorem]{Conjecture}
\theoremstyle{definition}
\newtheorem{definition}[theorem]{Definition}
\newtheorem{remark}[theorem]{Remark}

\title{Hurwitz Equivalence of Reflection Factorizations\\ in Types $B$ and $D$}
\author{Patrick Wegener}
\date{}
\begin{document}
\maketitle
\begin{abstract}
\noindent A conjecture of Lewis asserts that two reflection factorizations of the same element in a finite Coxeter group $W$ belong to the same Hurwitz orbit if and only if they generate the same subgroup $H$ of $W$ and have the same multiset of $H$-conjugacy classes. We prove this conjecture for every finite Coxeter group all of whose irreducible
components are of type $A$, $B$ or $D$. Furthermore we prove a reduction, which reduces the conjecture to a finite problem for the exceptional types.
\end{abstract}

\section{Introduction}
A \emph{Coxeter group} $W$ comes with a distinguished generating set $S$ of involutions, the set of \emph{simple reflections}, and with the larger set
\[
T=\{gsg^{-1}\ :\ g\in W,\ s\in S\}
\]
of all \emph{reflections}. Since $T$ generates $W$, every element $g\in W$ can be written as a product of reflections. We call $(t_1, \dots, t_m) \in T^m$ a \emph{reflection factorization} of $g \in W$ if $g = t_1 \cdots t_m$. For further details and definitions concerning Coxeter groups, we refer to \cite{Humphreys}.

Let $\mathcal{B}_m$ be the braid group on $m$ strands, with standard generators
$\sigma_1,\dots,\sigma_{m-1}$. The \emph{Hurwitz action} of $\mathcal{B}_m$ on reflection factorizations is defined on generators by
\[
\sigma_i\cdot(t_1,\dots,t_m)=(t_1,\dots,t_{i-1},\ t_{i+1},\ t_{i+1}^{-1}t_it_{i+1},\ t_{i+2},\dots,t_m).
\]
\[
\sigma_i^{-1}\cdot(t_1,\dots,t_m)=(t_1,\dots,t_{i-1},\ t_it_{i+1}t_i^{-1},\ t_i,\ t_{i+2},\dots,t_m),
\]
For reflection factorization $\underline{t}$ and $\underline{r}$, we write $\underline{t}\sim \underline{r}$ if both tuples lie in the same orbit, and call them \emph{Hurwitz equivalent}. When applying a braid $\sigma$ to a reflection factorization, we will often simply refer to this as a \emph{Hurwitz move}.

\medskip
In  the  present  paper, we investigate the following conjecture (see \cite[Conjecture~1]{DL} and \cite[Section 5]{Lewis}).

\begin{conjecture}[Lewis] \label{conjecture}
Let $W$ be a finite Coxeter group and $g \in W$ an arbitrary element. Then two reflection factorizations of $g$ belong to  the same Hurwitz orbit if and only if they generate the same subgroup $H$ of $W$ and have the same multiset of $H$-conjugacy classes.
\end{conjecture}

The necessity of the conjecture follows directly from the definition of the Hurwitz action. We investigate sufficiency.

\medskip
A tuple $(t_1, \dots, t_m) \in T^m$ is called \emph{full in $W$} if its factors generate $W$, that is, $W = \langle t_1, \dots, t_m \rangle$. 

\begin{remark}
Say that a finite Coxeter group $W$ has \textbf{property (F)} if any two reflection factorizations of one and the same element of $W$ that are full in W and have the same multiset of $W$-conjugacy classes are Hurwitz equivalent.

A reflection subgroup $H$ is itself a finite Coxeter group with set of reflections $T \cap H$. A reflection factorization of an element $g\in W$ with generated subgroup $H$ is precisely a reflection factorization of $g$ that is full in $H$. 
Consequently, for a fixed finite Coxeter group $W$:

\textit{Conjecture~\ref{conjecture} holds for $W$ if and only if every reflection subgroup of $W$ has property (F).}

In particular, Conjecture 1.1 holds for all finite Coxeter groups if and only if all finite Coxeter groups have property (F).
\end{remark}

We call a reflection factorization $(t_1, \ldots, t_m)$ of an element $g$ of a Coxeter group $W$ \emph{reduced} if there is no reflection factorization of $g$ with less than $m$ factors. An element $g \in W$ is called \emph{parabolic quasi-Coxeter element} if there exists a reduced reflection factorization $(t_1, \dots, t_m)$ of $g$ such that $\langle t_1, \dots, t_m \rangle$ is a parabolic subgroup of $W$.
Lewis and Douvropoulos \cite{DL} proved the conjecture for the case that $g$ is a parabolic quasi-Coxeter element in a finite Coxeter group $W$. In particular this proves the conjecture for each element $g$ in a finite Coxeter group of type $A$, because in type $A$ each element of $W$ is a parabolic quasi-Coxeter element. The type $A$ case is also covered by a result of Kluitmann (see Lemma~\ref{lem:cl-symmetric} below). Further previous results were achieved for (quasi-)Coxeter elements \cite{BGRW, Bessis, LR, WY2}. We will show the following result.

\begin{theorem}\label{thm:main}
Conjecture~\ref{conjecture} holds for every finite Coxeter group all of whose irreducible components are of type $A$, $B$ or $D$.
\end{theorem}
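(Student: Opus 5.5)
By the Remark, it suffices to show that every reflection subgroup of a finite Coxeter group $W$ whose irreducible components are of type $A$, $B$ or $D$ has property (F). I will proceed in three steps.

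\textbf{Step 1: reduce to irreducible groups.} Reflection subgroups of $W_1\times W_2$ are products of reflection subgroups of the factors, since every reflection lies in exactly one factor. Reflection subgroups of $B_n$ and $D_n$ are, up to conjugacy, products of groups of types $A_k$, $B_k$ and $D_k$. So I only need property (F) for irreducible groups of type $A$, $B$ and $D$. To reduce from a product $H=H_1\times H_2$ to its factors, I take a factorization full in $H$. Reflections from different factors commute, so Hurwitz moves let me sort it as (factors in $H_1$, then factors in $H_2$). Each part is then a factorization full in $H_i$ of the $H_i$-component of $g$. Moreover, $H$-conjugacy classes of reflections are exactly the $H_i$-classes. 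Hence property (F) for each factor gives property (F) for $H$. Type $A$ is handled by Kluitmann's result (Lemma~\ref{lem:cl-symmetric}).

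\textbf{Step 2: a normal form in types $B_n$ and $D_n$.} I model $B_n$ as signed permutations of $\pm[n]$. Its reflections are the transpositions $((i\,j))$, the signed transpositions $((i\,\bar j))$, and the sign changes $[i]$. In $D_n$ only the first two kinds occur. Given a factorization $\underline t$ of $g$, I split $g$ into its signed cycles. My plan is to use Hurwitz moves to bring $\underline t$ to a standard form determined by three pieces of data:
\begin{itemize}
\item the cycle type of $g$;
\item the number of sign-change factors, which is the multiset of conjugacy classes in $B_n$;
\item the length $m$.
\end{itemize}

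The method is induction on $m$ via ``peeling''. Using the transitivity of the Hurwitz action on reduced factorizations of parabolic quasi-Coxeter elements (Douvropoulos--Lewis \cite{DL}), I first normalize a reduced subfactorization. The remaining non-reduced part consists of extra factors. In the style of Kluitmann, I try to move these to the front as pairs $(t,t)$, or as a pair whose product is a fixed element. Two standard facts drive this:
\begin{itemize}
\item a repeated pair $(t,t)$ can be conjugated by any factor adjacent to it;
\item fullness lets me move such a pair to any reflection in the same conjugacy class of $H$.
\end{itemize}
After that, the pairs can be collected into a canonical form.

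\textbf{Step 3: the main obstacle.} I expect the hardest step to be the non-reduced factorizations of elements that are not parabolic quasi-Coxeter. A typical example in $B_n$ is an element with two negative cycles, whose reduced factorizations generate a non-parabolic subgroup such as $D_k\times D_l$. In this case the ``excess'' reflections must be absorbed so as to change the generated subgroup to all of $W$. To handle it, I would first prove a lemma: any factorization full in $B_n$ or $D_n$ is Hurwitz equivalent to one whose first two factors are equal, provided $m$ exceeds the reflection length by at least $2$. I would prove this by induction on $n$. The key ingredients are that the graph on $[n]$ with edges given by the factors is connected (this is where fullness enters), and that there is an explicit analysis of small cases such as $B_2$ and $D_3=A_3$. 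With the lemma, I cancel a pair $(t,t)$ and recurse on length $m-2$.

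Fullness must be preserved when the pair is cancelled. I would arrange this by choosing $t$ to be conjugate to another factor. In $B_n$ I need to keep track of the two classes of reflections separately, since the short-class count is an invariant. The base case is the minimal-length full factorizations, which do not reduce to this lemma. These I would handle by a direct enumeration of their shapes, using the cycle structure of $g$.
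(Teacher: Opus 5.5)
\textbf{There is a genuine gap.} Your Step~1 is fine and is exactly how the paper concludes: sort commuting components, use Lemma~\ref{lem:cl-subgroups}, and use Kluitmann for type $A$. The problem is that Steps~2--3 never actually establish property (F) in types $B$ and $D$.

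\emph{The recursion does not preserve fullness.} The pair lemma you plan to prove by induction on $n$ is essentially \cite[Corollary~1.4]{LR}, which the paper uses in Lemma~\ref{lem:GenPrefix}. It does not by itself drive a recursion, because after cancelling $(t,t)$ the remaining tuple must still be full. Choosing $t$ conjugate to another factor does not ensure this. In $W_{B_3}$, the tuple $(\delta_1,\delta_1,\delta_1,\delta_1,t^0_{12},t^0_{12},t^0_{23},t^0_{23})$ is full and $t^0_{12}$ is conjugate to $t^0_{23}$. Yet deleting $(t^0_{12},t^0_{12})$ leaves a tuple whose long graph is disconnected, so it is not full. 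The general fullness-preserving device is to keep a generating prefix of $n$ factors and cancel pairs only behind it, as in Theorem~\ref{thm:box}. That only reduces you to class counts $m_C\le b_C+n$, i.e.\ lengths up to $2n$ in $D_n$ and $3n$ in $B_n$, not to minimal full length.

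\emph{The base case is the whole problem.} Your base case is therefore an infinite family, and it contains precisely the cases you yourself call hardest. Take $g=-1\in W_{D_4}$, which has four negative cycles and is not parabolic quasi-Coxeter. Here $\ell_T(g)=4$, but by Lemma~\ref{lem:cl-projection} every full factorization has length at least $n+c-2=6$. So even the minimal full factorizations are non-reduced and lie outside the scope of \cite{DL}. ``Direct enumeration of their shapes'' for all $n$ and all signed cycle types is not an argument; the entire difficulty has been deferred to it.

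\emph{What is missing} is a normal form that handles every length at once, with no induction on length and no base case. The paper obtains it from the Hurwitz-equivariant projection $\rho\colon W_{B_n}\to\mathfrak S_n$. Kluitmann's theorem, applied to $\rho$ of the long factors (Lemma~\ref{lem:cl-projection}), fixes the sequence of underlying transpositions to the shape \eqref{eq:nf}, so only the $\mathbb F_2$-labels remain free. The bridge labels are then normalized using two facts: long reflections with the same edge commute, and a pair $(t,t)$ can be conjugated by a disjoint block with product $\delta_w$ (Lemma~\ref{lem:cl-pair}). The label sums on the bridge are forced by the cycle parities $p(C_i)$, and the tree labels are then uniquely determined by Lemma~\ref{lem:cl-tree}. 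In type $B$, the short reflections are first slid to the front and moved to $\delta_{r_1}$ along the connected long graph (Corollary~\ref{cor:relocate}).
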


Note that Conjecture~\ref{conjecture} has also been proven in the dihedral case by Berger \cite{Berger}. 

\medskip
\textbf{Outline of the paper.} In Section~\ref{sec:2} we prove a reduction of property (F). In Section~\ref{sec:classical-proof} we first show that property (F) holds in Coxeter groups of type $D$ and $B$ and then prove Theorem~\ref{thm:main}.

\medskip
\textbf{Notation.} We denote the symmetric group acting on $n$ letters by $\mathfrak{S}_n$. For a cycle $C = (i_1\,i_2\dots i_k)$ in $\mathfrak{S}_n$, its support is $\operatorname{supp}(C) = \{i_1, i_2, \dots, i_k\}$. Throughout we count fixed points as cycles of length $1$. If $g$ and $h$ are two elements of a group, we write $g^h = h^{-1}gh$ for conjugation. A conjugacy class of a group is sometimes referred to simply as a class. The support of a vector $v$, denoted by $\operatorname{supp}(v)$, is the set of indices where the vector's components are non-zero.

\medskip
\textbf{Disclosure of AI tools.} Generative AI tools, namely ChatGPT Astra and Claude Opus 5, were used during the exploratory and preparatory stages of this work. The author directed the mathematical strategy and proofs. AI assisted in several aspects like the formulation of auxiliary results and the elaboration of technical details, as well as consistency checks, and \LaTeX{} preparation. All AI-generated suggestions were independently reviewed and verified by the author, who assumes full responsibility for the mathematical arguments, results, and final content of the paper.

\section{A finite length reduction} \label{sec:2}
\begin{lemma}[{\cite[Lemma 2.5]{WY}}] \label{lem:pair}
Let $W$ be a Coxeter group with set of reflections $T$. If $\underline{t} = (t_1,\ldots ,t_m) \in T^m$ and $r \in T$, then for every $h \in\langle t_1,\ldots , t_m\rangle$,
\[
 (\underline{t} ,r,r)\sim(\underline{t}, h^{-1}rh,h^{-1}rh).
\]
In the latter equivalence the prefix $\underline{t}$ is restored exactly.
\end{lemma}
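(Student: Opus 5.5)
It suffices to treat the case where $h$ is a single generator $t_i$ of $\langle t_1,\ldots,t_m\rangle$. Indeed, $h$ is a product $t_{i_1}\cdots t_{i_k}$ of the $t_j$; no inverses are needed since each $t_j$ is an involution. Conjugating $r$ successively by $t_{i_1},\ldots,t_{i_k}$ therefore yields $h^{-1}rh$. If each single step $(\underline t,r',r')\sim(\underline t,t_ir't_i,t_ir't_i)$ holds for an arbitrary reflection $r'$, with $\underline t$ restored, then we can chain them. Note that $t_i r' t_i\in T$, so every intermediate tuple is again of the required form.

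For the single step, fix $i$ and the reflection $r$. First, I move $t_i$ to the right past the intermediate factors $t_{i+1},\dots,t_m$ using the braid $\sigma_{m-1}^{-1}\cdots\sigma_{i}^{-1}$, read in the appropriate order. Each $\sigma_j^{-1}$ conjugates the left neighbour into the right slot while keeping $t_i$ intact. This yields the tuple $(t_1,\dots,t_{i-1},t'_{i+1},\dots,t'_m,t_i,r,r)$, where $t'_j=t_it_jt_i$. Concretely, applying $\sigma_j^{-1}$ sends $(\dots,t_i,t_{j+1},\dots)$ to $(\dots,t_it_{j+1}t_i,t_i,\dots)$.

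Next I work on the last three entries $(t_i,r,r)$. The move $\sigma^{-1}$ applied to the pair $(t_i,r)$ gives $(t_irt_i,t_i,r)$. Applying $\sigma^{-1}$ to $(t_i,r)$ again gives $(t_irt_i,t_irt_i,t_i)$. The product is preserved, since $t_irt_i\cdot t_irt_i\cdot t_i=t_i=t_i\cdot r\cdot r$. Now $t_i$ sits last, preceded by $r'':=t_irt_i$ twice. Then I move $t_i$ back to position $i$ by the reverse of the first braid, with $r''$ doubled in between. Moving $t_i$ leftward past the pair $(r'',r'')$ by $\sigma$-moves conjugates $t_i$ by $r''$ twice, which is trivial, so $t_i$ re-emerges unchanged. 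Concretely, $\sigma$ on $(r'',t_i)$ gives $(t_i,\,t_ir''t_i)=(t_i,r)$; this needs care, so it is cleaner to instead undo the first braid only on the first $m$ slots. In other words, I apply the inverse of the first braid to positions $1,\dots,m$ after arranging $t_i$ to be in slot $m$.

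The main bookkeeping obstacle is this ordering. I will handle it by performing the operations in the order: (1) bring $t_i$ to slot $m$ by a braid $\beta$ on the first $m$ strands; (2) on slots $m,m+1,m+2$ apply $\sigma_m^{-1}\sigma_{m+1}^{-1}$ to get $(r'',r'',t_i)$; (3) move $t_i$ back to slot $m$ with $\sigma_{m+1}\sigma_m$ acting as the move that keeps the right entry's conjugate. Since the pair $(r'',r'')$ conjugates by $r''^2=1$, this yields $(\dots,t_i,r'',r'')$ with the first $m-1$ entries unchanged. (4) Apply $\beta^{-1}$ to the first $m$ strands, which restores $\underline t$ exactly because those $m$ entries are identical to the output of step (1). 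I would verify the direction conventions in step (3) by direct computation with the formulas for $\sigma_i^{\pm1}$.
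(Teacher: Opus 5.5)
Your strategy is sound, and after one repair it is a complete proof. Note that the paper gives no proof of its own here; it only cites \cite[Lemma 2.5]{WY}.

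The parts that already work:
\begin{itemize}
\item The reduction to $h=t_i$ is correct: iterating gives $t_{i_k}\cdots t_{i_1}\,r\,t_{i_1}\cdots t_{i_k}=h^{-1}rh$.
\item Steps (1) and (2) are correct as you computed them in words.
\item Step (4) restores $\underline t$ exactly, because (2)--(3) only touch strands $m,m+1,m+2$ and return $t_i$ to slot $m$.
\end{itemize}

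The error is in step (3). With the paper's convention, $\sigma_j$ moves the right entry to the left \emph{unchanged} and conjugates the left entry by it. So applying $\sigma_{m+1}$ and then $\sigma_m$ to $(r'',r'',t_i)$ gives $(r'',t_i,r)$ and then $(t_i,r,r)$. That is just the inverse of step (2), and your whole braid would act trivially. This is exactly the computation $(r'',t_i)\mapsto(t_i,r)$ from your third paragraph, which you flagged but did not resolve. The ``cleaner'' alternative there does not say how $t_i$ gets from slot $m+2$ back to slot $m$.

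Your justification via $r''^2=1$ needs a move in which $t_i$ itself is conjugated as it travels left, and that move is $\sigma_j^{-1}$. Applying $\sigma_{m+1}^{-1}$ and then $\sigma_m^{-1}$ gives $(r'',r'',t_i)\mapsto(r'',\,r''t_ir'',\,r'')\mapsto(t_i,\,r'',\,r'')$. So the three-strand core is: $\sigma_m^{-1},\sigma_{m+1}^{-1},\sigma_{m+1}^{-1},\sigma_m^{-1}$, applied in this order, send $(t_i,r,r)$ to $(t_i,\,t_irt_i,\,t_irt_i)$. With this correction the argument goes through.

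A slightly cleaner variant avoids steps (1) and (4). Slide the equal pair $(r,r)$ leftwards until it sits directly after $t_i$; an equal adjacent pair passes any entry unchanged, as used in the proof of Theorem~\ref{thm:box}. Do the three-strand move there, then slide the new pair back. This way no entry of $\underline t$ is ever altered.
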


\begin{lemma} \label{lem:GenPrefix}
Let $W$ be a finite Coxeter group $W$ of rank $n$ such that every reflection generating set contains a generating subset of size $n$. Then every full reflection tuple is Hurwitz equivalent to a tuple of reflections
\[
 (u_1,\ldots,u_n,v_1,\ldots,v_d,r_1,r_1,\ldots,r_p,r_p),
\]
where $\langle u_1,\ldots,u_n\rangle=W$ and $d\leq n$.
\end{lemma}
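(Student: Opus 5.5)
The plan is to build the tuple in three stages: first a generating prefix, then a reduced middle block, then a tail of repeated pairs. Each stage uses only Hurwitz moves, and we keep track of which factors are left unchanged.

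\textbf{Stage 1: generating prefix.} Let $\underline{t}=(t_1,\dots,t_m)$ be full in $W$. Then $\{t_1,\dots,t_m\}$ is a reflection generating set, so by hypothesis it contains a generating subset $\{t_{i_1},\dots,t_{i_n}\}$ of size $n$, with $i_1<\dots<i_n$. The basic observation is that $\sigma_i$ replaces $(t_i,t_{i+1})$ by $(t_{i+1},t_{i+1}t_it_{i+1})$. So a factor can be moved one step to the left without changing it; only the factor it passes is conjugated. We apply this first to $t_{i_1}$, moving it to position $1$, then to $t_{i_2}$, moving it to position $2$, and so on. The chosen factors themselves are never altered. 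We obtain $\underline{t}\sim(u_1,\dots,u_n,w_1,\dots,w_k)$ with $u_j=t_{i_j}$, so $\langle u_1,\dots,u_n\rangle=W$, and each $w_j$ is a reflection.

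\textbf{Stage 2: splitting the suffix into a reduced block and pairs.} We only act on the suffix $(w_1,\dots,w_k)$, so the prefix is preserved exactly. Put $x=w_1\cdots w_k$ and write $\ell_T$ for reflection length. If $k=\ell_T(x)$, set $d=k$ and there are no pairs. Otherwise the factorization is not reduced, and I would invoke the standard fact that a non-reduced reflection factorization in a finite Coxeter group is Hurwitz equivalent to one containing two adjacent equal factors $(\dots,r,r,\dots)$.

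Once an adjacent pair $(r,r)$ appears, it can be moved to the right end. Pushing the pair past a factor $s$ to its right means moving $s$ two steps to the left. Each step conjugates $s$ by $r$, so after both steps $s$ is unchanged, while the pair itself becomes a conjugated pair $(r',r')$. Removing this pair leaves a factorization of $x$ with $k-2$ factors. We then repeat the argument on that shorter block, always leaving the pairs already collected untouched at the end. The process terminates with a reduced block $(v_1,\dots,v_d)$ with $d=\ell_T(x)$, followed by pairs $(r_1,r_1,\dots,r_p,r_p)$. Finally, in a finite Coxeter group $\ell_T(x)$ equals the codimension of the fixed space of $x$ (Carter's lemma), which is at most $n$. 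Hence $d\le n$.

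\textbf{Main obstacle.} The delicate step is the fact used in Stage 2, that a non-reduced factorization yields an adjacent repeated pair after Hurwitz moves. I would prove it by induction on $k$, using Carter's lemma: the product $w_1\cdots w_k$ is reduced if and only if the roots $\alpha_1,\dots,\alpha_k$ of the factors are linearly independent. Take the smallest $j$ such that $\alpha_1,\dots,\alpha_j$ are dependent. Then $w_1\cdots w_{j-1}$ is reduced, and $\alpha_j$ lies in the span of $\alpha_1,\dots,\alpha_{j-1}$, which is the moved space of $w_1\cdots w_{j-1}$. It follows that $\ell_T(w_1\cdots w_j)=j-2$. By the Hurwitz transitivity of reduced factorizations of parabolic-type elements (as in \cite{DL}), I would then arrange $w_1\cdots w_j$ as a reduced factorization of length $j-2$ times a pair $r\cdot r$. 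If that citation is not directly applicable, I would first try to reduce to the rank-two subsystem spanned by $\alpha_j$ together with a suitable earlier root $\alpha_i$. In that dihedral subgroup the repeated pair can be produced explicitly, and it is then transported back into position by Hurwitz moves.
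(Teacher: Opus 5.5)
Stage~1 is the paper's construction of the generating prefix, word for word. Stage~2 is what the paper obtains by applying \cite[Corollary~1.4]{LR} to the suffix. The ``standard fact'' you invoke is exactly the content of that Lewis--Reiner result: a non-reduced reflection factorization in a finite Coxeter group is Hurwitz equivalent to one with two adjacent equal factors, hence to a reduced block of length $\ell_T\le n$ together with pairs. If you simply cite it, your proof is complete and coincides with the paper's.

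The argument you sketch under ``Main obstacle'' to prove that fact does not work, so it should not replace the citation. Hurwitz transitivity on reduced factorizations holds only for parabolic quasi-Coxeter elements. More to the point, it concerns reduced tuples only. To use it on the non-reduced tuple $(w_1,\ldots,w_j)$ you would have to rewrite the reduced prefix $(w_1,\ldots,w_{j-1})$ into a factorization ending in $w_j$. But every tuple in its Hurwitz orbit lies in $\langle w_1,\ldots,w_{j-1}\rangle$, and this subgroup need not contain $w_j$, even though $\alpha_j$ lies in the span of $\alpha_1,\ldots,\alpha_{j-1}$.

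Concretely, in $W_{B_2}$ take $(t^0_{12},t^1_{12},\delta_1)$, with roots $e_1-e_2$, $e_1+e_2$, $e_1$.
\begin{itemize}
\item The prefix has product $-1$. This element has two negative cycles, so it is not parabolic quasi-Coxeter by Lemma~\ref{lem:parabolicB}.
\item The prefix's Hurwitz orbit is just $\{(t^0_{12},t^1_{12}),(t^1_{12},t^0_{12})\}$, since the two factors commute.
\item We have $\delta_1\notin\langle t^0_{12},t^1_{12}\rangle$.
\end{itemize}
The triple does admit a repeated pair: applying $\sigma_2$ and then $\sigma_1^{-1}$ gives $(\delta_2,t^0_{12},t^0_{12})$. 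However, this uses moves that push $w_j$ into the first $j-1$ positions, and your sketch does not account for such moves.

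The rank-two fallback fails for a different reason. The root $\alpha_j$ together with a single earlier root $\alpha_i$ is linearly independent unless $\alpha_j=\pm\alpha_i$. The minimal dependent set containing $\alpha_j$ can be large, for example $e_1-e_2,\,e_2-e_3,\,e_3-e_4,\,e_1-e_4$ coming from $((1\,2),(2\,3),(3\,4),(1\,4))$ in $\mathfrak S_4$, where any two of these roots are independent. So there is no dihedral subgroup in which a repeated pair can be produced until the circuit has been shortened by Hurwitz moves. Showing that such shortening is always possible is precisely the nontrivial content of the Lewis--Reiner circuit argument.
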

\begin{proof}
Select $n$ entries generating $W$, in their original order. Move the first selected entry left to position one using Hurwitz moves, then the second to position two, and so on. 
The resulting prefix is the selected generating tuple. Put $q=u_1\cdots u_n$. The suffix has product $q^{-1}g$. Applying \cite[Corollary~1.4]{LR} only to this suffix yields the assertion. 
\end{proof}

\begin{definition}
Let $W$ be a Coxeter group with set of reflections $T$ and $(t_1,\dots, t_m) \in T^m$. For each $W$-conjugacy class $C$ of reflections write 
$$m_C((t_1,\dots,t_m)) = | \{ i : 1 \leq i \leq m, ~t_i \in C\}| $$ 
for the number of factors in $C$ and call $(m_C((t_1,\dots,t_m)))_C$ the \emph{class count vector} of $(t_1,\dots,t_m)$.
\end{definition}

\begin{definition}
Let $W$ be a finite irreducible Coxeter group of rank $n$ with set of reflections $T$. We call $R \subseteq T$ a \emph{minimal reflection generating set} if $\langle R \rangle = W$ and $|R| = n$.
Furthermore, define $b_C := |R \cap C|$.
\end{definition}

By \cite[Lemma~6.4]{WY} the definition of $b_C$ does not depend on the choice of $R$. Note that in the simply-laced types we have $(b_C)_C = (n)$, while in type $B_n$ we have $(b_C)_C = (1,n-1)$ since each minimal reflection generating set consists of one reflection in a short root and $n-1$ reflections in long roots.

\begin{theorem}\label{thm:box}
Let $W$ be an irreducible, non-dihedral finite Coxeter group of rank $n$ with set of reflections $T$. Suppose that for every $g \in W$ and every pair $\underline{t}, \underline{t}'$ of full reflection factorizations of $g$ in $W$ satisfying
\[
  m_C(\underline{t}) = m_C(\underline{t}') \le b_C + n
  \qquad \text{for every $W$-conjugacy class } C \subseteq T, 
\]
one has $\underline{t} \sim \underline{t}'$. Then the same conclusion holds for every pair of full reflection factorizations $\underline{t}, \underline{t}'$ of every $g \in W$ with $m_C(\underline{t}) = m_C(\underline{t}')$ for all $C$ (with no bound).
\end{theorem}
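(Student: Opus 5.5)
We argue by induction on the total length $m=\sum_C m_C(\underline{t})$. Take $\underline{t},\underline{t}'$ full factorizations of $g$ with the same class count vector and suppose $m_{C_0}(\underline{t})>b_{C_0}+n$ for some class $C_0$; otherwise the hypothesis applies. The plan is to bring both tuples into a normal form ending in a repeated pair $(r,r)$ with $r\in C_0$. We then show that the pair can be chosen to be the \emph{same} reflection in both tuples, and cancel it. Cancelling a pair from the end leaves factorizations of the same element $g$ (since $r\cdot r=1$) of length $m-2$. The remaining prefix must still be full and have equal class counts, and then the induction hypothesis gives equivalence of the prefixes. Appending $(r,r)$ to both yields $\underline{t}\sim\underline{t}'$.

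\textbf{Step 1 (normal form).} We first justify the hypothesis of Lemma~\ref{lem:GenPrefix}, namely that every reflection generating set of an irreducible non-dihedral finite $W$ contains a generating subset of size $n$. If this is not already available, it can be cited from \cite{WY}; the same source that proves $b_C$ is independent of $R$ should also give this. Applying Lemma~\ref{lem:GenPrefix} to $\underline{t}$ gives
\[
\underline{t}\sim(u_1,\dots,u_n,v_1,\dots,v_d,r_1,r_1,\dots,r_p,r_p),
\]
with $\langle u_i\rangle=W$ and $d\le n$. The prefix $(u_1,\dots,u_n)$ is a minimal reflection generating set, so it contains exactly $b_{C_0}$ elements of $C_0$. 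Among the $v_j$ there are at most $d\le n$ further elements of $C_0$. Hence $m_{C_0}>b_{C_0}+n$ forces some pair $(r_k,r_k)$ with $r_k\in C_0$. Pairs commute past one another as blocks, since moving $(s,s)$ across $x$ replaces it by $(s^x,s^x)$, still a pair in the same class. So we may move this pair to the end, giving $\underline{t}\sim(\underline{w},r,r)$ with $r\in C_0$, where $\underline{w}$ still begins with $(u_1,\dots,u_n)$. In particular $\underline{w}$ is full, and its class count vector is that of $\underline{t}$ minus $2$ in $C_0$. The same procedure gives $\underline{t}'\sim(\underline{w}',r',r')$ with $r'\in C_0$ and $\underline{w}'$ full.

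\textbf{Step 2 (matching the pair).} Since $r,r'\in C_0$, we have $r'=h^{-1}rh$ for some $h\in W=\langle\underline{w}\rangle$. By Lemma~\ref{lem:pair}, $(\underline{w},r,r)\sim(\underline{w},r',r')$ with the prefix $\underline{w}$ unchanged. Both $\underline{w}$ and $\underline{w}'$ are full factorizations of $g$ with identical class count vectors and length $m-2$. By induction, whose base is the hypothesized bounded case, $\underline{w}\sim\underline{w}'$. Braids acting on the first $m-2$ strands leave the trailing $(r',r')$ fixed, so $\underline{t}\sim(\underline{w},r',r')\sim(\underline{w}',r',r')\sim\underline{t}'$.

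\textbf{Main obstacle.} Most of the work sits in Step 1. It requires Lemma~\ref{lem:GenPrefix} to apply, that is, the existence of size-$n$ generating subsets; this is exactly where the non-dihedral hypothesis matters, since in dihedral groups rank-$2$ generation can fail for subsets. It also requires that \cite[Corollary~1.4]{LR} produce a suffix with at most $n$ unpaired reflections. We also must check that the counting is sharp: $b_{C_0}$ comes from the $u$'s and at most $n$ from the $v$'s, so any excess beyond $b_{C_0}+n$ lies in pairs. Everything else is formal bookkeeping of Hurwitz moves.
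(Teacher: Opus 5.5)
Your proposal is correct and follows essentially the same route as the paper: induction on the length, the normal form from Lemma~\ref{lem:GenPrefix}, the count showing that some equal pair lies in $C_0$, moving that pair to the end, matching the two pairs via Lemma~\ref{lem:pair}, and applying induction to the full prefixes. The one point to fix is the citation for size-$n$ generating subsets: the paper takes this fact from \cite[Lemma~13]{DL}, not from \cite{WY}, so cite that rather than speculating.
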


\begin{proof}
Fix $g\in W$ and a pair $\underline{t},\underline{t}'$ of full reflection factorizations of $g$ in $W$ with $m_C:= m_C(\underline{t})=m_C(\underline{t}')$ for every conjugacy class $C$. We write $N=\sum_C m_C$ for the common length of the two tuples. We argue by induction on $N$. The induction hypothesis is the assertion of the theorem for all $g$ and all such pairs of common length smaller than $N$. Note that fullness, i.e. $\langle\underline{t}\rangle= \langle\underline{t}'\rangle=H$, has to be maintained at every step.

Suppose first that $m_C\le b_C+n$ holds for every class $C$, that is, the pair $\underline{t},\underline{t}'$ fulfills the assumptions of the theorem, so $\underline{t}\sim\underline{t}'$ and there is nothing left to prove. This also provides the base of the induction: if $N\le n$ then $m_C\le N\le n\le b_C+n$ for every $C$, so every pair of common length at most $n$ also fulfills the assumption of the theorem.

From now on we may therefore fix a class $C_0$ with $m_{C_0}>b_{C_0}+n$. The aim is to produce full factorizations $\underline{a},\underline{b}$ of $g$ of common length $N-2$, with common class-count vector obtained from $(m_C)_C$ by lowering the $C_0$-entry by $2$, together with a single reflection $s\in C_0$ such that
\[
\underline{t}\sim(\underline{a},s,s)
\qquad\text{and}\qquad
\underline{t}'\sim(\underline{b},s,s).
\]

Since $W$ is irreducible, non-dihedral and of rank $n$, every reflection generating set of $W$ contains a generating subset of size $n$ by \cite[Lemma~13]{DL}, so Lemma~\ref{lem:GenPrefix} applies to both tuples. It yields
\[
\underline{t}\;\sim\;(u_1,\dots,u_n,\;v_1,\dots,v_d,\;r_1,r_1,\dots,r_p,r_p),
\]
where $\langle u_1,\dots,u_n\rangle=W$ and $d\le n$ with $q=u_1\cdots u_n$, and likewise for $\underline{t}'$. We call the tuples obtained in this way the \emph{normal forms} of $\underline{t}$ and $\underline{t}'$. Both normal forms are still full reflection factorizations of $g$ with the same class-count vector $(m_C)_C$ as before (as these properties are preserved by the Hurwitz action).

In the normal form of $\underline{t}$, the prefix $(u_1,\dots,u_n)$ is a reflection generating set of $W$ of size $n$, so it contains exactly $b_{C_0}$ entries of class $C_0$. The block $(v_1,\dots,v_d)$ has at most $n$ entries in total, hence at most $n$ of class $C_0$. Therefore the number of entries of class $C_0$ among the block $(r_1,r_1,\dots,r_p,r_p)$ is at least
\[
m_{C_0}-b_{C_0}-n>0 .
\]
The block $(r_1,r_1,\dots,r_p,r_p)$ consists of pairs of equal reflections, so an entry of class $C_0$ occurring there comes with its partner, which lies in $C_0$ as well. Hence some pair $(r_i,r_i)$ with $r:=r_i\in C_0$ occurs in the normal form of $\underline{t}$. By the same arguments some pair $(s,s)$ with $s\in C_0$ occurs in the normal form of $\underline{t}'$.

It is easy to see that an equal adjacent pair can be moved past the remaining factors by using Hurwitz moves without changing any of them. Moving the pair $(r,r)$ (resp. the pair $(s,s)$) to the very end of each tuple gives
\[
\underline{t}\sim(\underline{a},r,r),
\qquad
\underline{t}'\sim(\underline{b},s,s),
\qquad r,s\in C_0 ,
\]
where $\underline{a}$ and $\underline{b}$ consist of the remaining $N-2$ entries. Note that $\underline{a}$ still contains the generating prefix $(u_1,\dots,u_n)$, hence
$\langle \underline{a} \rangle=W$. The same holds mutatis mutandis for $\underline{b}$. The common class-count vector of $\underline{a}$ and $\underline{b}$ is $(m_C)_C$ with the $C_0$-entry lowered by $2$, and their common length is $N-2$.

The reflections $r$ and $s$ both lie in the class $C_0$, so $s=h^{-1}rh$ for some $h\in W$. As $\langle \underline{a} \rangle=W$, the element $h$ can be written as a word in the entries of $\underline{a}$. By Lemma~\ref{lem:pair} we have
\[
(\underline{a},r,r)\;\sim\;(\underline{a},h^{-1}rh,h^{-1}rh)=(\underline{a},s,s),
\]
with the prefix $\underline{a}$ restored entry by entry.

So far we have observed that $\underline{a}$ and $\underline{b}$ are full reflection factorizations of the same element $g\in W$, they have the same class-count vector, and their common length $N-2$ is smaller than $N$. The induction hypothesis therefore yields $\underline{a}\sim\underline{b}$. Performing the corresponding Hurwitz moves inside the first $N-2$ positions and leaving the last two untouched gives
$(\underline{a},s,s)\sim(\underline{b},s,s)$, whence
\[
\underline{t}\;\sim\;(\underline{a},r,r)\;\sim\;(\underline{a},s,s)\;\sim\;
(\underline{b},s,s)\;\sim\;\underline{t}' .
\]
This completes the induction and the proof.
\end{proof}

\begin{remark}
By Theorem~\ref{thm:box}, the problem of proving property (F) is reduced to a finite problem, with an explicit bound. In particular, for the finite irreducible Coxeter groups of exceptional type, we list the maximum length for which it is sufficient to check property (F).
\end{remark}

\begin{center}
\begin{tabular}{@{}lccc@{}}
\toprule
Type & Rank & Reflection classes & Sufficient maximum length\\
\midrule
$H_3$ & 3 & 1 & 6\\
$H_4$ & 4 & 1 & 8\\
$F_4$ & 4 & 2 & 12\\
$E_6$ & 6 & 1 & 12\\
$E_7$ & 7 & 1 & 14\\
$E_8$ & 8 & 1 & 16\\
\bottomrule
\end{tabular}
\end{center}
In type $B_n$ we obtain $3n$ as maximum length, while in type $D_n$ we obtain $2n$. In fact, we prove property (F) in the next section for types $B_n$ and $D_n$ without using these upper bounds.

\section{A proof for the classical families}
\label{sec:classical-proof}

In this section we prove property (F) for the finite Coxeter groups of types $B$ and $D$ and thereby conclude Theorem~\ref{thm:main}.

\subsection{Two elementary Hurwitz operations}

The following result is similar in spirit to Lemma \ref{lem:pair} and can be checked directly. 
\begin{lemma}
\label{lem:cl-pair}
Suppose a tuple of reflections contains a consecutive pair $(t,t)$, and a disjoint consecutive block $B$ whose product is $b$. There is a sequence of Hurwitz moves that replaces $(t,t)$ by $\left(t^b,t^b \right)$ and leaves every other factor, including the block $B$, unchanged.
\end{lemma}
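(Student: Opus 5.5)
The idea is to move the pair $(t,t)$ next to the block $B$ without changing anything, then push the pair through $B$, which conjugates it by $b$ while leaving $B$ intact, and finally move the conjugated pair back to its original position. Throughout I use the conventions of the Hurwitz action from the introduction: $\sigma_i$ replaces $(x,y)$ by $(y,x^{y})$ and $\sigma_i^{-1}$ replaces $(x,y)$ by $(xyx^{-1},x)$.

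\textbf{Step 1 (transporting a pair freely).} An adjacent equal pair $(t,t)$ commutes past any single reflection $x$ with no change to any factor. Concretely, for $(t,t,x)$ apply $\sigma^{-1}$ at positions $2,3$ to get $(t,txt,t)$, then $\sigma^{-1}$ at positions $1,2$ to get $(x,t,t)$, since $t(txt)t=x$. The mirror computation moves $(x,t,t)$ to $(t,t,x)$. Iterating, the pair can be slid past all factors between it and $B$, in either direction, with every other factor unchanged. This is the fact already used in the proof of Theorem~\ref{thm:box}. So I may assume $(t,t)$ sits immediately to the left of $B=(x_1,\dots,x_k)$, where $b=x_1\cdots x_k$; the case where it sits to the right is symmetric.

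\textbf{Step 2 (pushing through one reflection).} From $(t,t,x)$ apply $\sigma_2$ to get $(t,x,t^x)$, then $\sigma_1$ to get $(x,t^x,t^x)$. Thus passing one reflection $x$ from left to right conjugates the pair by $x$ and leaves $x$ unchanged. Doing this successively for $x_1,\dots,x_k$ produces
\[
(x_1,\dots,x_k,t^{x_1\cdots x_k},t^{x_1\cdots x_k}),
\]
and with the convention $g^h=h^{-1}gh$ this pair is exactly $(t^b,t^b)$.

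\textbf{Step 3 (returning the pair).} Use Step 1 to slide $(t^b,t^b)$ back over $B$ and the intervening factors to the original position of $(t,t)$, again changing nothing else.

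\textbf{Other orientation and main obstacle.} If $B$ lies to the left of the pair, the mirror move $\sigma^{-1}$ conjugates the pair by $x^{-1}=x$ as it passes $x$ leftwards, giving a pair conjugated by $x_k\cdots x_1=b^{-1}$. To obtain $t^b$ instead, first slide the pair to the left of $B$ using Step 1, and then apply Step 2. The only real care needed is bookkeeping: checking that the conjugation order gives $b$ rather than $b^{-1}$, and that the free slides of Step 1 touch no factor. I expect no genuine obstacle.
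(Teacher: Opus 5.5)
Your proposal is correct. It is the direct verification the paper leaves to the reader: the paper only remarks that the lemma ``can be checked directly'' and is similar in spirit to Lemma~\ref{lem:pair}. Your three steps (slide the equal pair freely up to $B$, push it once through $B$ so that it is conjugated by $b$ while $B$ stays fixed, slide it back) are computed correctly, including the orientation bookkeeping that yields $t^b$ rather than $t^{b^{-1}}$.
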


\begin{lemma}[Kluitmann {\cite{KL88}}]
\label{lem:cl-symmetric}
The set of all transposition factorizations $(\tau_1, \tau_2, \dots , \tau_m)$ of fixed length $m$ of an element $\pi \in \mathfrak{S}_n$ such that $\langle \tau_1, \tau_2, \dots , \tau_m \rangle = \mathfrak{S}_n$ forms a single orbit under the Hurwitz action.
\end{lemma}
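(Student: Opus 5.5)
The plan is to prove the statement by induction on $n$. The case $n\le 2$ is trivial, since then there is at most one transposition. The inductive step is a normal form argument centred on the letter $n$.

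\emph{Step 1 (isolating the letter $n$).} Let $(\tau_1,\dots,\tau_m)$ be a transitive factorization of $\pi$. Hurwitz moves act on adjacent factors as follows.
\begin{itemize}
\item A factor not involving $n$ can be moved to the left past any factor. The moving factor stays unchanged, while a conjugated factor involving $n$ still involves $n$.
\item A factor $(a\,n)$ can be moved to the right past a transposition not involving $n$. It may be conjugated to some $(a'\,n)$, but it still involves $n$.
\end{itemize}
Using these moves I bring all factors involving $n$ into a terminal block $(x_1\,n),\dots,(x_k\,n)$, preceded by a prefix of transpositions in $\mathfrak{S}_{n-1}$.

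Inside the terminal block I use the relation $(a\,n)(b\,n)=(b\,n)(a\,b)$ for $a\neq b$. It is realized by $\sigma_i^{-1}$, which turns the adjacent pair $\bigl((a\,n),(b\,n)\bigr)$ into $\bigl((b\,n),(a\,b)\bigr)$. This lets me expel a factor not involving $n$ from the block and move it into the prefix. Repeating, the block shrinks until all remaining $n$-factors are equal, say $(x\,n)^k$. If $k\ge 3$, the pairs $(x\,n),(x\,n)$ can be separated by Lemma~\ref{lem:cl-pair}: conjugate one pair by a prefix element moving $x$ to some $y\ne x$, then apply the expulsion relation again. So I may assume $k\in\{1,2\}$. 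Whether $k=1$ or $k=2$ is decided by whether $\pi(n)\ne n$ or $\pi(n)=n$.

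\emph{Step 2 (transitivity of the prefix).} Throughout Step 1, transitivity of the whole tuple is preserved, since the Hurwitz action preserves the generated subgroup. The letter $n$ is joined to the rest only through the single letter $x$. Hence the prefix generates $\mathfrak{S}_{n-1}$, i.e.\ it is a transitive factorization of $\pi'$, where $\pi'=\pi\,(x\,n)$ or $\pi'=\pi$ respectively. Its length is $m-k$, which is determined by $m$ and by whether $n$ is fixed by $\pi$.

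\emph{Step 3 (matching two tuples).} Take two transitive factorizations of $\pi$ of length $m$ and put both in the normal form of Step 1.
\begin{itemize}
\item If $k=1$, the terminal factor is forced to be $(\pi^{-1}(n)\,n)$. Indeed, the prefix fixes $n$, so the final transposition must send $n$ to $\pi(n)$ in the appropriate convention. The prefixes are then transitive factorizations of the same $\pi'\in\mathfrak{S}_{n-1}$ of the same length, and the induction hypothesis applied inside the first $m-1$ positions finishes the argument.
\item If $k=2$, the terminal pairs $(x\,n)^2$ and $(x'\,n)^2$ may differ. By Lemma~\ref{lem:cl-pair}, with $B$ the whole prefix, I can conjugate the pair by any element of the transitive prefix group to replace $x$ by $x'$. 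Then the induction hypothesis applies to the prefixes, both of which are transitive factorizations of $\pi|_{\{1,\dots,n-1\}}$.
\end{itemize}

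\emph{Main obstacle.} I expect the delicate point to be Step 1: showing that the block of $n$-factors can always be reduced to length $1$ or $2$ while keeping the prefix transitive on $\{1,\dots,n-1\}$. In particular, this requires checking that when all block factors coincide, the conjugation trick via Lemma~\ref{lem:cl-pair} is available. I would handle this by first treating the case where the prefix is already transitive on $\{1,\dots,n-1\}$. I would then argue that expelling $(a\,b)$ factors only enlarges the connectivity of the prefix, and verify that this does not break in the degenerate case $m=n-1+2g$ with $g=0$, i.e.\ when the factors form a tree.
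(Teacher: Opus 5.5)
Your proof is correct, but it is not the paper's route: the paper does not prove this lemma. It quotes it from Kluitmann and uses it as a black box, most importantly to obtain the normal form of Lemma~\ref{lem:cl-projection}. Your induction on $n$, peeling off the letter $n$ until only a constant block $(x\,n)^k$ with $k\in\{1,2\}$ remains, gives a self-contained, elementary proof. It needs only the pair-conjugation trick, which the paper already has as Lemma~\ref{lem:pair} and Lemma~\ref{lem:cl-pair}. In effect it builds a canonical orbit representative directly, much as Lemma~\ref{lem:cl-projection} does for projected tuples. So your approach buys self-containment, and it would let the type $B$/$D$ arguments rest on nothing external from the symmetric-group side. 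The citation buys brevity.

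Three small points to tidy up.
\begin{itemize}
\item With the paper's conventions, the move sending $\bigl((a\,n),(b\,n)\bigr)$ to $\bigl((b\,n),(a\,b)\bigr)$ is $\sigma_i$, not $\sigma_i^{-1}$. The inverse move gives $\bigl((a\,b),(a\,n)\bigr)$, which works equally well and already puts the expelled factor on the prefix side.
\item In the case $k=2$, Lemma~\ref{lem:cl-pair} with $B$ the whole prefix only conjugates by the prefix product $\pi$. That product need not send $x$ to $x'$. You should either invoke Lemma~\ref{lem:pair}, which allows conjugation by any $h$ in the group generated by the prefix, or iterate Lemma~\ref{lem:cl-pair} along a path from $x$ to $x'$ in the prefix graph, using single-factor blocks.
\item The ``main obstacle'' you flag is already settled by your own Step~2 argument. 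That argument applies at every stage where the $n$-block is constant, not only at the end. If the block is $(x\,n)^k$, the whole tuple's graph is connected and $n$ is a leaf attached at $x$. Hence the prefix graph is connected on $\{1,\dots,n-1\}$, so for $n\ge 3$ some prefix factor $(x\,y)$ exists. Lemma~\ref{lem:cl-pair} with $B=(x\,y)$ then turns a pair $(x\,n),(x\,n)$ into $(y\,n),(y\,n)$, after which you can expel again. The reduction to $k\le 2$ terminates because each expulsion shortens the block by one. No separate treatment of the tree case $m=n-1$, and no argument about enlarging connectivity, is needed.
\end{itemize}
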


\subsection{Signed permutations and a projected normal form}

We realize the finite Coxeter group of type $B$ as the group of signed permutations. More precisely,
\[
 W_{B_n}=\mathbb F_2^n\rtimes\mathfrak S_n,
 \qquad
 (v,\pi)(w,\rho)=(v+\pi(w),\pi\rho),
\]
where $(v,\pi)$ acts as
$\operatorname{diag}((-1)^{v_1},\ldots,(-1)^{v_n})P_\pi$ and $\pi(w)$ is the the permuted vector
$\bigl(\pi(w)\bigr)_j = w_{\pi^{-1}(j)}$.
Write $e_i$ for the $i$th coordinate vector, and set
\[
 \delta_i=(e_i,1),\qquad
 t_{ij}^{a}=\bigl(a(e_i+e_j),(ij)\bigr)
 \quad(a\in\mathbb F_2,\ i\ne j).
\]
The $\delta_i$ are the \emph{short} reflections of $W_{B_n}$, and the $t_{ij}^{a}$ are its \emph{long} reflections. The subgroup $W_{D_n}$ consists of the pairs $(v,\pi)$ with $\sum_i v_i=0$. Its reflections are exactly the long reflections. The following identities will be used repeatedly:
\begin{align}
 t_{ij}^{a}t_{ij}^{b}
   &=\delta_i^{a+b}\delta_j^{a+b},
 \label{eq:cl-pair-product}\\
 \bigl(t_{ij}^{a}\bigr)^{\delta_u}
   &=t_{ij}^{a+u_i+u_j},
 \qquad \delta_u:=(u,1).
 \label{eq:cl-diagonal-conjugation}
\end{align}
In particular, $t_{ij}^0$ and $t_{ij}^1$ commute. Therefore every ordering of a sequence supported on the same underlying transposition $(i\, j)$ can be obtained by Hurwitz moves that merely swap neighboring factors. For $g=(v,\pi) \in W_{B_n}$ and a cycle $C$ of $\pi$, define its \emph{signed parity} by
\[
 p(C)=\sum_{j\in C}v_j\in\mathbb F_2.
\]
Call $C$ \emph{positive} if $p(C)=0$ and \emph{negative} if $p(C)=1$. An element of $W_{D_n}$ has an even number of negative cycles.

\begin{lemma}
\label{lem:cl-tree}
Let $C \in \mathfrak{S}_n$ be a cycle with support $V := \operatorname{supp}(C)$, $|V| = k \geq 2$, and let $T = (\tau_1,\dots,\tau_{k-1})$ be a reduced transposition factorization of $C$, say $\tau_m = (i_m\, j_m)$. Let $z = (v,C) \in W_{B_n}$ with $\operatorname{supp}(v) \subseteq V$ and $p(C) = \sum_{j \in V} v_j = 0$. Then there is exactly one labelling $a = (a_1,\dots,a_{k-1}) \in \mathbb F_2^{\,k-1}$ with
\[
  t^{a_1}_{i_1 j_1}\, t^{a_2}_{i_2 j_2}\cdots t^{a_{k-1}}_{i_{k-1} j_{k-1}} \;=\; z .
\]
\end{lemma}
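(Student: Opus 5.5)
We reduce the statement to linear algebra over $\mathbb F_2$. Since $\pi(t^a_{ij}) = (i\,j)$ for every $a$, the product $t^{a_1}_{i_1j_1}\cdots t^{a_{k-1}}_{i_{k-1}j_{k-1}}$ always has permutation part $\tau_1\cdots\tau_{k-1}=C$. Its vector part is computed from the multiplication rule $(v,\pi)(w,\rho)=(v+\pi(w),\pi\rho)$. With $\sigma_m:=\tau_1\cdots\tau_{m-1}$, it is
\[
 \Phi(a)\;=\;\sum_{m=1}^{k-1} a_m\,\sigma_m\bigl(e_{i_m}+e_{j_m}\bigr)
 \;=\;\sum_{m=1}^{k-1} a_m\bigl(e_{\sigma_m(i_m)}+e_{\sigma_m(j_m)}\bigr).
\]
So $\Phi:\mathbb F_2^{k-1}\to\mathbb F_2^n$ is linear in $a$. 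The lemma asserts that $\Phi$ is a bijection onto
\[
U:=\{v:\operatorname{supp}(v)\subseteq V,\ \textstyle\sum_{j\in V}v_j=0\}.
\]
Note that $\dim U=k-1$.

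First, $\Phi$ lands in $U$. Each $\sigma_m$ permutes $V$, because every $\tau_l$ is supported in $V$ (a reduced factorization of a $k$-cycle uses only transpositions inside its support). Each summand $e_x+e_y$ with $x\neq y\in V$ has even weight supported in $V$, so it lies in $U$.

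Since domain and target both have dimension $k-1$, it suffices to show that $\Phi$ is injective. Equivalently, the $k-1$ vectors $e_{x_m}+e_{y_m}$, where $\{x_m,y_m\}=\sigma_m(\{i_m,j_m\})$, must be linearly independent. View these as the edges $\{x_m,y_m\}$ of a graph $\Gamma$ on $V$. A set of edge vectors $e_x+e_y$ is linearly independent over $\mathbb F_2$ exactly when the edges form a forest. With $k-1$ edges on $k$ vertices, this is equivalent to $\Gamma$ being a spanning tree.

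The main obstacle is proving that $\Gamma$ is a tree. Note that $\{x_m,y_m\}$ is the transposition $\sigma_m\tau_m\sigma_m^{-1}$. Rewriting $C=\tau_1\cdots\tau_{k-1}$ by successive conjugation gives
\[
C=\tau'_{k-1}\tau'_{k-2}\cdots\tau'_1,\qquad \tau'_m:=\sigma_m\tau_m\sigma_m^{-1}.
\]
Indeed $\sigma_{m+1}=\sigma_m\tau_m=\tau'_m\sigma_m$, so by induction $\sigma_{m+1}=\tau'_m\cdots\tau'_1$, and $m=k-1$ gives this identity. Thus $\Gamma$ is the edge graph of another factorization of the $k$-cycle $C$ into $k-1$ transpositions. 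Such a factorization must have a connected graph on $V$. Otherwise the product would preserve each connected component, a proper subset of $V$, contradicting that $C$ is transitive on $V$. A connected graph on $k$ vertices with $k-1$ edges is a tree. This gives injectivity, hence bijectivity of $\Phi:\mathbb F_2^{k-1}\to U$.

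Since $z=(v,C)$ has $v\in U$ by hypothesis, there is exactly one $a$ with $\Phi(a)=v$, which is the claimed unique labelling. For the write-up we will simply state the standard facts used: edge vectors of a forest are independent over $\mathbb F_2$, and transitivity of $C$ on $V$ forces the graph to be connected.
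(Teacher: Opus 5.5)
Your proof is correct, and it takes a genuinely different route from the paper.

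You write the vector part of the labelled product explicitly as $\sum_m a_m\bigl(e_{\sigma_m(i_m)}+e_{\sigma_m(j_m)}\bigr)$. This makes the labelling map visibly $\mathbb F_2$-linear: it is the boundary (edge-to-vertex) map of the graph $\Gamma$ of the rewritten factorization $C=\tau'_{k-1}\cdots\tau'_1$. You then show directly, from transitivity of $C$ and an edge count, that $\Gamma$ is a spanning tree of $V$, and you finish with a dimension count.

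The paper never expresses the product as a linear function of the labels. It works on the group side with a gauge argument. Conjugation by $\delta_u$ shifts the labels by the coboundary $\partial u$ of the original tree $G(T)$, whose tree property is cited. Since $\partial$ is surjective, every labelled product is a $\delta_u$-conjugate of $(0,C)$, namely $(u+C(u),C)$. Existence and uniqueness then come from analysing $\psi(u)=u+C(u)$, whose kernel $\{0,\mathbf 1_V\}$ coincides with $\ker\partial$.

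What each route buys:
\begin{itemize}
\item Your version is shorter and more self-contained. The only graph fact you need is proved on the spot, and $\psi$ is not needed at all.
\item The paper's version yields an extra structural statement: the elements $(v,C)$ with $v$ supported in $V$ and parity $0$ form a single orbit under conjugation by the diagonal elements $\delta_u$. Relabelling via $\delta_w$-conjugation is exactly the mechanism used later in operation (ii) and Lemma~\ref{lem:cl-short}.
\end{itemize}

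One fact you quote without proof is that a reduced factorization of a $k$-cycle uses only transpositions inside $V$. It follows from the same component argument you give for $\Gamma$. Apply it to the graph of $T$ on $[n]$: $V$ lies in a single component, which therefore has at least $k$ vertices. The whole graph has only $k-1$ edges, so that component is exactly $V$ and contains every edge.
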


\begin{proof}
Write $\mathbb F_2^V \subseteq \mathbb F_2^n$ for the vectors supported in $V$, and put
\[
  \Phi \colon \mathbb F_2^{\,k-1} \longrightarrow W_{B_n}, ~\alpha = (a_1,\dots, a_{k-1}) \mapsto t^{a_1}_{i_1 j_1}\cdots t^{a_{k-1}}_{i_{k-1} j_{k-1}} ,
\]
so the assertion is that $\Phi$ maps $\mathbb F_2^{\,k-1}$ bijectively onto the set
\[
  Z := \bigl\{\, (v,C) : \operatorname{supp}(v) \subseteq V,\ \textstyle\sum_{j\in V} v_j = 0 \,\bigr\}.
\]
Note that $\Phi(\alpha)$ has underlying permutation $\tau_1\cdots\tau_{k-1} = C$, and all
vectors involved are supported in $V$. Moreover the total sign parity
\[
  \varepsilon \colon W_{B_n} \to \mathbb F_2, \qquad \varepsilon\bigl((v,\pi)\bigr) := \sum_{j=1}^n v_j,
\]
is a group homomorphism. Since
$t^a_{ij} = \bigl(a(e_i+e_j),(ij)\bigr)$ has $\varepsilon(t^a_{ij}) = 2a = 0$, we get
$\varepsilon(\Phi(\alpha)) = 0$ for every $\alpha$. Hence $\Phi(\alpha) \in Z$.

By \cite[Lemma~3.10.2]{GR01} the graph $G(T) := \bigl(V, \{\,\{i_m,j_m\} : 1 \le m \le k-1\,\}\bigr)$ is a tree.
Define the $\mathbb F_2$-linear map
\[
  \partial \colon \mathbb F_2^V \longrightarrow \mathbb F_2^{\,k-1}, \qquad
  (\partial u)_m := u_{i_m} + u_{j_m} ,
\]
which labels the edge $\{i_m,j_m\}$ by $(\partial u)_m$. By \eqref{eq:cl-diagonal-conjugation} we have
$\bigl(t^{a_m}_{i_m j_m}\bigr)^{\delta_u} = t^{a_m + u_{i_m}+u_{j_m}}_{i_m j_m}$, that is,
\begin{equation}\label{eq:gauge}
  \Phi(\alpha)^{\delta_u} \;=\; \Phi(\alpha + \partial u) \qquad (\alpha \in \mathbb F_2^{\,k-1},\ u \in \mathbb F_2^V).
\end{equation}
We have $\partial u = 0$ if and only if $u_i = u_j$ for every edge $\{i,j\}$ of $G(T)$, that is, $\partial u = 0$ if and only if $u$ is constant on each connected component of $G(T)$. Since $G(T)$ is a tree, hence connected, we obtain that $\ker \partial = \{0, \mathbf 1_V\}$ has dimension $1$. The rank-nullity theorem yields $\dim \operatorname{im} \partial = k - 1$, hence $\partial$ is surjective.
In particular every labelling $\alpha$ is of the form $\alpha = \partial u$, and
\begin{equation}\label{eq:conj}
  \Phi(\alpha) \;=\; 
  \Phi(\partial u) \;=\; \Phi(0+\partial u)\; \stackrel{\eqref{eq:gauge}}{=} \; \Phi(0)^{\delta_u}
  \;=\; (u,1)(0,C)(u,1) \;=\; \bigl(u + C(u),\, C\bigr).
\end{equation}
Now consider the $\mathbb F_2$-linear map $\psi \colon \mathbb F_2^V \to \mathbb F_2^V$, $\psi(u) := u + C(u)$. Since $C$ acts transitively on $V$, we have $C(u) = u$ if and only if $u$ is constant on $V$, so
$\ker\psi = \{0,\mathbf 1_V\}$ and $\dim\operatorname{im}\psi = k-1$.
Moreover $\varepsilon(\psi(u)) = \sum_j u_j + \sum_j u_j = 0$, so $\operatorname{im}\psi$ is
contained in the subspace of vectors of coordinate sum $0$, which also has dimension $k-1$. Hence $\operatorname{im}\psi$ is exactly that subspace. Therefore, if $z = (v,C) \in Z$, there is $u \in \mathbb F_2^V$ with $u + C(u) = v$, and then $\alpha := \partial u$ satisfies $\Phi(\alpha) = z$ by \eqref{eq:conj}.

It remains to show that $\alpha$ is unique. Let $\alpha, \alpha' \in \mathbb F_2^{\,k-1}$ with $\Phi(\alpha) = \Phi(\alpha') = z$. By surjectivity of $\partial$, we may write $\alpha = \partial u$ and $\alpha' = \partial u'$, and by \eqref{eq:conj} we obtain $u + C(u) = u' + C(u')$, i.e.\
$\psi(u+u') = 0$. Hence we have $u + u' \in \ker(\psi) = \{0,\mathbf 1_V\} = \ker\partial$, which gives $0 = \partial(u+u') = \partial u + \partial u'$ and therefore
$\alpha' = \partial u' = \partial u = \alpha$.
\end{proof}

\begin{lemma}
\label{lem:cl-projection}
Let $\pi \in \mathfrak S_n$ have cycles $C_1,\dots,C_c$ with $c \ge 2$. Fix representatives $r_i \in \operatorname{supp}(C_i)$ and, for each $i$, a reduced transposition factorization $T_i$ of $C_i$ (which is empty if $|\operatorname{supp}(C_i)| = 1$). Put
$e := (r_1\,r_2)$ and $f_i := (r_1\,r_i)$ for $3 \le i \le c$. If
$\underline{\tau} = (\tau_1,\dots,\tau_m)$ is a transposition factorization of $\pi$ with
$\langle \tau_1,\dots,\tau_m\rangle = \mathfrak S_n$, then
\begin{equation} \label{eq:cl-normal-form}
  m = n + c - 2 + 2h \quad\text{for the integer } h := \tfrac12\bigl(m - n - c + 2\bigr) \ge 0,
\end{equation}
and $\underline{\tau}$ is Hurwitz equivalent to
\begin{equation}\label{eq:nf}
  \bigl(\underbrace{e,\dots,e}_{2h+2 \text{ entries}},\;
        f_3,f_3,\;\dots,\;f_c,f_c,\;
        T_1,\dots,T_c\bigr).
\end{equation}
\end{lemma}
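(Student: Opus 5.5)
Both claims reduce to showing that the tuple in \eqref{eq:nf} is itself a transposition factorization of $\pi$ that generates $\mathfrak S_n$ and has length $m$. Once that is checked, Kluitmann's Lemma~\ref{lem:cl-symmetric} gives the Hurwitz equivalence immediately, since all generating factorizations of $\pi$ of a fixed length form one orbit.

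\textbf{Length formula.} I would use the sign homomorphism together with a connectivity bound.

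\emph{Parity.} Reading the sign of $\pi$ from its factorization gives $(-1)^m=(-1)^{n-c}$, so $m\equiv n-c \equiv n+c-2 \pmod 2$. Hence $h$ is an integer.

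\emph{Lower bound.} The factors generate $\mathfrak S_n$, so the graph on $[n]$ with edges $\{i,j\}$ for $\tau_k=(i\,j)$ is connected. In particular it has at least $n-1$ edges, so $m\ge n-1$.

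\emph{Sharpening.} Suppose $m=n-1$. Then this graph is a spanning tree. A product of the $n-1$ edges of a tree, in any order, is an $n$-cycle by \cite[Lemma~3.10.2]{GR01} (equivalently, the reflection length is $n-1$, so the product has $n-(n-1)=1$ cycle). So $c=1$, contradicting $c\ge2$. Hence $m\ge n$.

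\emph{Conclusion.} Combining $m\ge n$ with $m\equiv n+c-2\pmod 2$ does not yet give $m\ge n+c-2$ in general, so a stronger argument is needed. The standard route is the Riemann--Hurwitz type inequality: a transitive factorization of $\pi$ with $c$ cycles into $m$ transpositions satisfies
\[
m\ge n+c-2.
\]
I would prove this by induction on $m$. Deleting the last factor changes the cycle count by $\pm1$ and changes the number of connected components of the graph by at most one. Track the quantity $m-n-\#\text{cycles}+2\#\text{components}$ and show it stays nonnegative. This bookkeeping is the main technical step, and it gives $h\ge0$.

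\textbf{The normal form is a valid target.} Here I verify product, generation and length for \eqref{eq:nf}.

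\emph{Product.} The block $e^{2h+2}$ contributes the identity, as does each pair $f_i,f_i$. The remaining blocks $T_1\cdots T_c$ have disjoint supports, so their product is $C_1\cdots C_c=\pi$.

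\emph{Generation.} The edge set is the union of the trees $G(T_i)$ on each cycle support, joined by the edges $\{r_1,r_2\}$ and $\{r_1,r_i\}$ for $i\ge3$. This graph is connected and spans $[n]$, so it generates $\mathfrak S_n$.

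\emph{Length.} The count is
\[
(2h+2)+2(c-2)+\sum_i(|C_i|-1)=2h+2c-2+n-c=n+c-2+2h=m.
\]

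Applying Lemma~\ref{lem:cl-symmetric} then yields $\underline\tau\sim$ \eqref{eq:nf}. I expect the only real obstacle to be the inequality $m\ge n+c-2$. If that induction becomes messy, I would instead cite the standard genus formula for transitive factorizations.
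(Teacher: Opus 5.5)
Your plan matches the paper's: show that $h$ is a non-negative integer, check that \eqref{eq:nf} is a generating transposition factorization of $\pi$ of length $m$, and invoke Kluitmann's Lemma~\ref{lem:cl-symmetric}. Your verification of product, generation and length is the same as in the paper, and the sign argument for integrality is fine. The gap is the one nontrivial step, $m \ge n+c-2$.

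You name the right potential, $m - n - \#\text{cycles} + 2\#\text{components}$. But the facts you list for the induction step do not make it monotone: the cycle count changing by $\pm 1$ and the component count by at most one also allow a step where a transposition merges two components while splitting a cycle of the partial product. That step lowers the quantity by $2$. The missing observation rules this out. Each cycle of $\pi_{l-1} = \tau_1\cdots\tau_{l-1}$ is an orbit of $\langle \pi_{l-1}\rangle \le \langle \tau_1,\dots,\tau_{l-1}\rangle$, so it lies inside a connected component of $G(\tau_1,\dots,\tau_{l-1})$. Hence a transposition joining two components necessarily joins two different cycles. With this, the possible steps change your quantity by $0$ (connecting edge), $+2$ (non-connecting edge merging two cycles) or $0$ (non-connecting edge splitting a cycle). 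Since the quantity is $0$ for the empty tuple, it ends at $m-n-c+2 = 2h \ge 0$.

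This is the paper's argument in different bookkeeping. The paper counts merges $J$ and splits $S$, so $m = J+S$ and $c = n-J+S$. It uses the same observation to see that each of the $n-1$ connecting indices is a merge, whence $J \ge n-1$ and $h = J-n+1$. That gives integrality of $h$ for free, so your sign computation is redundant. Your \emph{sharpening} to $m \ge n$ is a dead end and can be dropped. Citing the Riemann--Hurwitz (genus) formula for transitive factorizations, your fallback, would also close the gap, but the direct argument above is short enough to include.
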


\begin{proof}
We write $c(\sigma)$ for the number of cycles of a permutation $\sigma \in \mathfrak S_n$ (with fixed points included). For $0 \le l \le m$ we write $\pi_l := \tau_1\cdots\tau_l$ with $\pi_0 = \mathrm{id}$ and $\tau_l = (i_l \, j_l)$. Then
\[
  c(\pi_l) = c(\pi_{l-1} \tau_l) =
  \begin{cases}
    c(\pi_{-1}) - 1, & \text{if $i_l$ and $j_l$ lie in different cycles of $\pi_{l-1}$,}\\
    c(\pi_{l-1}) + 1, & \text{if $i_l$ and $j_l$ lie in the same cycle of $\pi_{l-1}$;}
  \end{cases}
\]
If we denote by $J$ (resp. $S$) the number of indices $l \in \{1,\dots,m\}$ such $i_l$ and $j_l$ lie in different cycles (resp. in the same cycle) of $\pi_{l-1}$, that is $J+S = m$, we obtain
\begin{equation} \label{equ:cnJS}
  c = c(\pi_m) = c(\pi_0) - J + S = n - J + S.
\end{equation}

\smallskip
For $0 < l \leq m$, we define the graph
\[
G_l = G\bigl(\tau_1,\dots,\tau_l \bigr) := \bigl([n],\ \{\,\{i_p,j_p\} : 1 \le p \le l\,\}\bigr).
\]
and define $G_0$ as the graph on vertex set $[n]$, where each vertex is isolated. The orbits of $\langle \tau_1, \dots, \tau_l\rangle$ on $[n]$ are the vertex sets of the connected components of $G\bigl(\tau_1,\dots,\tau_l \bigr)$. By \cite[Lemma~3.10.1]{GR01} on each connected component the generated subgroup induces the full symmetric group of that component. In particular $\langle\tau_1,\dots,\tau_l\rangle = \mathfrak S_n$ if and only if $G\bigl(\tau_1,\dots,\tau_l \bigr)$ is connected. 

Call $l$ a \emph{connecting} index if the edge $\{i_l,j_l\}$ joins two distinct connected components of $G\bigl(\tau_1,\dots,\tau_{l-1} \bigr)$. We claim that if $l$ is a connecting index, then $i_l$ and $j_l$ lie in different cycles of $\pi_{l-1}$. As $\pi_{l-1} \in \langle \tau_1,\dots,\tau_{l-1}\rangle$, each orbit of $\langle \pi_{l-1}\rangle$, that is each cycle of $\pi_{l-1}$, is contained in an orbit of $\langle \tau_1,\dots,\tau_{l-1}\rangle$, hence in a connected component of $G\bigl(\tau_1,\dots,\tau_{l-1} \bigr)$. If $l$ is connecting, then $i_l$ and $j_l$ lie in different connected components of $G\bigl(\tau_1,\dots,\tau_{l-1} \bigr)$, so they lie in different cycles of $\pi_{l-1}$.

The graph $G_0$ has $n$ connected components. Passing from $G_{l-1}$ to $G_l$ decreases the number of components by $1$ if $l$ is connecting and leaves it unchanged otherwise. As $\langle \tau_1, \dots, \tau_m \rangle = \mathfrak S_n$, the graph $G_m = G(\underline{\tau})$ is connected by. Hence there are exactly $n - 1$ connecting indices, and $J \ge n-1$.

\smallskip
Substituting \eqref{equ:cnJS} into the equation $J+S=m$ gives $m = 2J - (n - c)$, that is, $m = n + c - 2 + 2\bigl(J - n + 1\bigr)$, and $h := J - n + 1 \ge 0$ since $J \geq n-1$. Solving for $h$ shows $h = \frac12(m-n-c+2)$. In particular $h$ depends only on $m$, $n$ and $\pi$, not on the tuple $\underline{\tau}$.

We want to show that the tuple \eqref{eq:nf} is a transposition factorization of $\pi$ of length $m$ generating $\mathfrak S_n$. Denote it by $\underline{\nu}$. All factors $e$ and $f_i$ ($3 \leq i \leq c$) are transpositions and each $T_i$ has product $C_i$ with the $C_i$ pairwise disjoint, hence commute. So the product of $\underline{\nu}$ equals $C_1 \cdots C_c = \pi$. Since $T_i$ is reduced, it has $|\operatorname{supp}(C_i)| - 1$ entries, so $\underline{\nu}$ has
\[
  (2h+2) + 2(c-2) + \sum_{i=1}^{c}\bigl(|\operatorname{supp}(C_i)| - 1\bigr)
  = 2h + 2 + 2c - 4 + n - c = n + c - 2 + 2h = m
\]
entries. As we have seen in the proof of Lemma~\ref{lem:cl-tree}, the graph $G(T_i)$ is a tree on $\operatorname{supp}(C_i)$. In particular $\operatorname{supp}(C_i)$ lies in a single connected component of $G(\underline{\nu})$. The edges $\{r_1,r_2\}$ and $\{r_1,r_i\}$ ($3 \le i \le c$) then join these $c$ connected components into one, so $G(\underline{\nu})$ is connected on $[n]$, thus if follows by \cite[Lemma~3.10.1]{GR01} that $\underline{\nu}$ generates $\mathfrak S_n$.

Overall, we have seen that both $\underline{\tau}$ and $\underline{\nu}$ are transposition factorizations of $\pi$ of the same length $m$ generating $\mathfrak S_n$. By Lemma~\ref{lem:cl-symmetric} we obtain $\underline{\tau} \sim \underline{\nu}$.
\end{proof}

\subsection{Property (F) for type $D$}

\begin{lemma}[{\cite[Corollary~3.16]{DLM}}]
\label{lem:cl-d-parabolic}
If an element of $W_{D_n}$ has zero or two negative cycles, it is a
parabolic quasi-Coxeter element.
\end{lemma}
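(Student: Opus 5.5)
The plan is to build, for $g=(v,\pi)\in W_{D_n}$ with zero or two negative cycles, an explicit reflection factorization with three properties: it is reduced, its factors live on pairwise disjoint coordinate blocks, and on each block they generate a reflection subgroup that is the pointwise stabilizer of a coordinate subspace. A product of such subgroups on disjoint blocks is again the fixator of a subspace, hence a parabolic subgroup.

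For reducedness I would use Carter's formula $\ell_T(g)=\operatorname{codim}\operatorname{Fix}(g)$. A positive cycle of $\pi$ contributes a one-dimensional fixed line, and a negative cycle contributes nothing. So if $g$ has $c_+$ positive cycles, then $\ell_T(g)=n-c_+$, and it suffices to produce a factorization of exactly that length.

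\emph{Positive cycles.} Let $C$ be a positive cycle with support $V$, $|V|=k$, and let $z=(v|_V,C)$ be the corresponding factor of $g$.
\begin{itemize}
\item Take any reduced transposition factorization of $C$. Lemma~\ref{lem:cl-tree} gives labels $\alpha$ such that the $k-1$ long reflections $t^{a_m}_{i_mj_m}$ multiply to $z$.
\item The proof of that lemma shows $\alpha=\partial u$ for some $u$. By \eqref{eq:gauge}, these reflections are the $\delta_u$-conjugates of the unlabelled transpositions $t^0_{i_mj_m}$.
\item The unlabelled transpositions form a tree on $V$, so they generate the copy of $\mathfrak S_V$ fixing $\{x: x_i=x_j \text{ for } i,j\in V\}$.
\item Conjugating by $\delta_u\in W_{B_n}$, which normalizes $W_{D_n}$ and maps fixators of subspaces to fixators of subspaces, the group generated by our factors is the fixator of the line $\delta_u\cdot\mathbf 1_V$ inside $\mathbb R^V$. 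This group is parabolic in $W_{D_n}$.
\end{itemize}
This uses $k-1$ reflections for the block. If $g$ has no negative cycles, concatenating the blocks over all cycles gives $\sum(k_i-1)=n-c_+$ factors, which is reduced, and the generated group is the fixator of the span of the lines $\delta_{u_i}\mathbf 1_{V_i}$.

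\emph{Two negative cycles.} Let $C_1,C_2$ be the negative cycles, with supports $V_1,V_2$ of sizes $k,l$, and put $V=V_1\cup V_2$.
\begin{itemize}
\item Conjugate by a suitable $\delta_u$ supported on $V$ to move all signs on $V_1$ to a single point $r_1$, and on $V_2$ to a single point $r_2$. This is possible because the image of $\psi$ in the proof of Lemma~\ref{lem:cl-tree} consists of the even vectors. After this, the restriction of $g$ to $V$ is $\delta_{r_1}\delta_{r_2}\cdot(0,C_1C_2)$.
\item By \eqref{eq:cl-pair-product}, $\delta_{r_1}\delta_{r_2}=t^0_{r_1r_2}t^1_{r_1r_2}$. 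Together with unlabelled tree factorizations of $C_1$ and $C_2$, this gives $(k-1)+(l-1)+2=k+l$ long reflections with the required product, once the order is arranged. The two pair factors commute and can be placed in front, since $\delta_{r_1}\delta_{r_2}$ is diagonal.
\item These reflections generate $W_D(V)$: the tree edges together with $\{r_1,r_2\}$ give a connected graph on $V$, so all $t^0_{ij}$ with $i,j\in V$ are generated. The extra reflection $t^1_{r_1r_2}$ then yields all $t^1_{ij}$ by conjugation.
\item $W_D(V)$ is the fixator of $\{x: x_j=0\ \forall j\in V\}$, hence parabolic. This also covers the degenerate case $k+l=2$, where $D_2=A_1\times A_1$. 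Undo the conjugation by $\delta_u$ as in the positive case.
\end{itemize}
Combining this block with the positive-cycle blocks gives $n-c_+$ factors in total, and the generated subgroup is a product of fixators on disjoint coordinate sets, hence parabolic.

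The main obstacle I expect is the parabolicity claim after twisting by $\delta_u\notin W_{D_n}$. Here I would rely on the characterization of parabolic subgroups as pointwise stabilizers of subspaces, which is invariant under the normalizer $W_{B_n}$. A secondary point is checking that the chosen ordering of the factors really multiplies to $g$ rather than to a conjugate.
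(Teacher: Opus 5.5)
\textbf{How your route differs from the paper.} The paper gives no argument for this lemma; it is imported from \cite[Corollary~3.16]{DLM}. Your proof is a genuinely different, self-contained route built from the paper's own tools:
\begin{itemize}
\item for reducedness, the inequality $\ell_T(g)\ge\operatorname{codim}\operatorname{Fix}(g)=n-c_+$ (only this easy direction of Carter's formula is needed);
\item Lemma~\ref{lem:cl-tree} and \eqref{eq:gauge}, to write each positive block as $\delta_u$-conjugates of a spanning tree of transpositions;
\item surjectivity of $\psi$ onto even vectors, to concentrate the signs of the two negative cycles at $r_1,r_2$;
\item the identity $\delta_{r_1}\delta_{r_2}=t^0_{r_1r_2}t^1_{r_1r_2}$.
\end{itemize}
The following are all fine: the products, the count $n-c_+$, the identification of the block groups, and the invariance of parabolicity under conjugation by $W_{B_n}$. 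For the block groups, $\delta_u\mathfrak S_V\delta_u$ is the fixator of $\mathbb R\,\delta_u\mathbf 1_V\oplus\mathbb R^{[n]\setminus V}$, and $W_{D_V}$ is the fixator of $\mathbb R^{[n]\setminus V}$. Your route gives an explicit reduced factorization and independence from \cite{DLM}. The citation supplies more than the paper needs.

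\textbf{The gap.} One step is wrong as stated: the gluing principle ``a product of such subgroups on disjoint blocks is again the fixator of a subspace, hence parabolic''. In $W_{D_n}$ this fails. $W_{D_{\{1,2\}}}$ and $W_{D_{\{3,4\}}}$ are the fixators of $\{x_1=x_2=0\}$ and $\{x_3=x_4=0\}$. Yet their product $\cong A_1^4$ has fixed space $0$ and is strictly smaller than $\operatorname{Fix}_{W_{D_4}}(0)=W_{D_4}$, so it is not parabolic. Your principle would therefore show that $-1=\delta_1\delta_2\delta_3\delta_4\in W_{D_4}$, which has four negative cycles, is parabolic quasi-Coxeter. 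That is false. So this is exactly where the hypothesis ``zero or two'' must enter, and it needs an argument.

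\textbf{The fix.} Let $X$ be the intersection of the block fixed spaces, so the generated group $G$ fixes $X$ pointwise. By Steinberg's theorem, $\operatorname{Fix}_{W_{D_n}}(X)$ is generated by the reflections $t^a_{ab}$ whose root $e_a\pm e_b$ is orthogonal to $X$. Suppose $a$ and $b$ lie in different blocks. There is at most one negative block, so one of them, say $b\in V_i$, lies in a positive block. Pairing $e_a\pm e_b$ with $\delta_{u_i}\mathbf 1_{V_i}\in X$ gives $\pm1\neq 0$, so such a reflection does not fix $X$. Hence every reflection fixing $X$ has both indices in a single block, and there it belongs to the corresponding block group. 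Therefore $\operatorname{Fix}_{W_{D_n}}(X)=G$, and $G$ is parabolic. The same check justifies your claim in the case without negative cycles. With this added, the proof is complete.
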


For a subset $V \subseteq [n]$ put
\[
  W_{B_V} := \bigl\{\, (v,\pi) \in W_{B_n} \ :\ \operatorname{supp}(v) \subseteq V
      \text{ and } \pi(j) = j \text{ for all } j \notin V \,\bigr\},
\]
the subgroup of those signed permutations that move no coordinate outside $V$. 
Restricting to the coordinates in $V$ identifies $W_{B_V}$ with $W_{B_{|V|}}$, and under this identification the reflections of $W_{B_n}$ lying in $W_{B_V}$ are exactly the $\delta_i$ with $i \in V$ and the $t^a_{ij}$ with $i,j \in V$, that is, the reflections of $W_{B_{|V|}}$. 
For $g = (v,\pi) \in W_{B_V}$ we write
\begin{equation} \label{eq:DefpV}
  p_V(g) := \sum_{j \in V} v_j \in \mathbb F_2
\end{equation}
for its signed parity on $V$. If $\pi$ restricted to $V$ is a single cycle $C$, then $p_V(g) = p(C)$.

\smallskip
We consider the projection 
$$\rho \colon W_{B_n} \to \mathfrak S_n,~(v,\pi) \mapsto \pi$$
and we apply it to tuples entrywise, writing
$\rho(\underline{t}) := (\rho(t_1),\dots,\rho(t_m))$, where $\underline{t} = (t_1, \dots, t_m)$. The map $\rho$ is equivariant with respect to the Hurwitz action, that is
\begin{equation}\label{eq:equivariance}
  \rho(\beta \cdot \underline{t}) = \beta \cdot \rho(\underline{t})
  \quad\text{for every braid } \beta .
\end{equation}
Recall also that every long reflection is $t^a_{ij}$ for a unique $a \in \mathbb F_2$ and a
unique pair $\{i,j\}$, so a tuple of long reflections is determined by its sequence of
underlying transpositions together with its sequence of labels.

\begin{theorem}
\label{thm:cl-d-full}
For $n\ge3$, two full reflection factorizations of an element of $W_{D_n}$ of the same length are Hurwitz equivalent.
\end{theorem}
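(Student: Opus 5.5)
Let $g=(v,\pi)\in W_{D_n}$ and let $\underline t,\underline t'$ be two full reflection factorizations of $g$ of length $m$. The plan is to split the problem into a permutation part and a label part. For the permutation part we use the equivariant projection $\rho$. For the label part we exploit the gauge freedom of Lemma~\ref{lem:cl-tree} together with the pair moves \eqref{eq:cl-pair-product} and Lemma~\ref{lem:cl-pair}. Fullness of $\underline t$ in $W_{D_n}$ implies that $\rho(\underline t)$ generates $\mathfrak S_n$, because $\rho(W_{D_n})=\mathfrak S_n$. Hence Lemma~\ref{lem:cl-projection} applies to $\rho(\underline t)$ and $\rho(\underline t')$ with the same choices of $r_i$, $T_i$, $e$, $f_i$.

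If $\pi$ is a single cycle ($c=1$), we treat it separately. Then $g$ has at most one negative cycle, hence none, so it is parabolic quasi-Coxeter by Lemma~\ref{lem:cl-d-parabolic}. We can then appeal to \cite{DL}, since in $D_n$ there is only one reflection class and the length is fixed. More generally, the cases with zero or two negative cycles are also covered by \cite{DL} in this way, but we aim for a uniform argument. By \eqref{eq:equivariance} we may lift the Hurwitz moves from Lemma~\ref{lem:cl-projection}. This makes both tuples equivalent to tuples whose underlying transpositions are exactly \eqref{eq:nf}, so the two tuples now differ only in their label sequences $a,a'\in\mathbb F_2^m$.

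The next step is to normalize the labels block by block. Since $t^0_{ij}$ and $t^1_{ij}$ commute, the block of $2h+2$ entries on $e=(r_1\,r_2)$ can be reordered freely. The same holds for each pair on $f_i$. The remaining freedom should be removed using conjugation by diagonal elements $\delta_u$ that lie in the group generated by suitable subtuples. By \eqref{eq:cl-pair-product}, a product $t^0_{ij}t^1_{ij}=\delta_i\delta_j$ lies in the generated subgroup whenever both labels occur on the same transposition. Lemma~\ref{lem:cl-pair} and Lemma~\ref{lem:pair} then let us conjugate equal adjacent pairs $(t,t)$ by such elements while leaving everything else fixed, which flips labels of pairs relative to each other. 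For each tree block $T_i$, the product restricted to $\operatorname{supp}(C_i)$ is determined by $g$ up to the $\delta$-contributions coming from the $e,f$ blocks. The plan is to push all sign discrepancies into the $e$/$f$ blocks, which is possible when the $C_i$-part has signed parity $0$. Lemma~\ref{lem:cl-tree} then forces the labels on each $T_i$ to be unique. Negative cycles must absorb a $\delta_{r_1}\delta_{r_i}$ coming from a mixed pair $t^0t^1$ on $f_i$ or $e$.

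Concretely, the normal form I aim for is the following. The $e$-block is $(t^1_e,t^0_e,t^0_e,\dots)$ or all labels $0$. Each $f_i$-pair is $(t^{\epsilon_i}_{f_i},t^{0}_{f_i})$, where $\epsilon_i=p(C_i)$ (and $\epsilon$ for $C_2$ is handled by the $e$-block via $p(C_1)+p(C_2)$). The $T_i$ carry the unique labels from Lemma~\ref{lem:cl-tree}. The main obstacle is showing that every label sequence can be brought to this form. This requires moving label flips between blocks with Hurwitz moves that preserve underlying transpositions, for instance by sliding a pair $(t^a_{f_i},t^a_{f_i})$ past a tree edge to conjugate it, and checking that fullness in $W_{D_n}$ (not merely in $\mathfrak S_n$) guarantees that a mixed-label pair exists whenever it is needed. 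Here fullness should supply some element $\delta_i\delta_j$ in the generated group, which is where $n\ge3$ and the $D_n$ structure enter. I would first try to prove that any two labellings of the $e$-block with the same multiset-independent product are equivalent once one $\delta_{r_1}\delta_{r_2}$ is available. After that, I would propagate along the tree edges by conjugating with transpositions from the $T_i$.
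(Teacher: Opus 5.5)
\textbf{There is a genuine gap.} Your overall plan is the paper's: lift the braid of Lemma~\ref{lem:cl-projection} through $\rho$ to put both tuples in normal position, normalize the bridge labels by swapping commuting entries and conjugating equal pairs, and let Lemma~\ref{lem:cl-tree} force the tree labels. The gap is exactly the step you flag as the main obstacle, and the mechanism you propose for it does not suffice. Two ingredients are missing.

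First, the parity bookkeeping. Write the bridge product as $D=\delta_u$ and use that every tree block $U_i$ has signed parity $0$ on $V_i$. Then $g=DU$ gives $d_i=p(C_i)$ for $2\le i\le c$, where $d_2$ is the label sum of the $e$-block and $d_i=b_i+b_i'$. So the parity of the $e$-block is $p(C_2)$; your expression via $p(C_1)+p(C_2)$ is not right. The relation $p(C_1)=\sum_{i\ge2}d_i$ then holds automatically.

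Second, a supply of flipping elements. By \eqref{eq:cl-diagonal-conjugation}, conjugation by $\delta_a\delta_b$ flips the label of $t_{ij}$ only when exactly one of $i,j$ lies in $\{a,b\}$. So a mixed class on $\{r_1,r_2\}$ yields $\delta_{r_1}\delta_{r_2}$. This repairs $(1,1)$-pairs on the edges $\{r_1,r_i\}$, $i\ge3$, but can never lower the number of label-$1$ entries in the $e$-block. For that you need a \emph{second} mixed parallel class. Fullness gives only one: a normal-position tuple is full in $W_{D_n}$ iff at least one parallel class of the bridge is mixed.

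The paper secures two mixed classes as follows. It first disposes of elements with zero or two negative cycles via Lemma~\ref{lem:cl-d-parabolic} and \cite[Theorem~18]{DL}. It then indexes so that $C_2$ and $C_3$ are negative, giving $d_2=d_3=1$. The mixed $f_3$-pair (product $\delta_{r_1}\delta_{r_3}$) is used with Lemma~\ref{lem:cl-pair} to bring the $e$-block to $(0,1,0,\dots,0)$. The mixed front of the $e$-block (product $\delta_{r_1}\delta_{r_2}$) then turns every $(1,1)$ $f_i$-pair into $(0,0)$. After this reduction your target form agrees with the paper's up to order within each parallel class.

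Your uniform version also contains concrete errors. If $g$ has no negative cycle, all $d_i=0$, and your target has all bridge labels $0$. By the gauge argument in the proof of Lemma~\ref{lem:cl-tree}, some $\delta_u$ then conjugates every entry to label $0$. So the target generates a conjugate of $\{(0,\pi)\}\cong\mathfrak S_n$ and cannot be Hurwitz equivalent to a full tuple; there, fullness actually forces $2\le q_e\le 2h$.

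Likewise, suppose $c=2$ and both cycles are negative. Then no consecutive block of a normal-position tuple disjoint from an $e$-pair has product $\delta_w$ with $w_{r_1}+w_{r_2}=1$, so Lemma~\ref{lem:cl-pair} alone cannot reduce $q_e$.

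A uniform proof would therefore need a corrected normal form and Lemma~\ref{lem:pair}, applied with $h$ in the subgroup generated by the entries \emph{other than} the pair. You would have to check each time that this subgroup still contains a suitable $\delta_w$. The fact that ``$\delta_i\delta_j$ lies in the generated subgroup'' of the whole tuple is not what that lemma needs.

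Finally, sliding pairs past tree edges, or conjugating by transpositions of the $T_i$, changes underlying transpositions and destroys normal position. It is also unnecessary: once the bridge is fixed, $U=Dg$ and hence each $U_i$ is determined, and Lemma~\ref{lem:cl-tree} pins down the tree labels.
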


\begin{proof}
First note that all reflections of $W_{D_n}$ are conjugate in $W_{D_n}$. The hypothesis of the theorem is therefore the hypothesis of Conjecture~\ref{conjecture}.

Writing $g = (v,\pi)$ with cycles $C_1,\dots,C_c$ of $\pi$ (fixed points included), the total sign parity gives
\begin{equation} \label{eq:totalsignparity}
\sum_{i=1}^{c} p(C_i) = \sum_{j=1}^{n} v_j = 0
\end{equation}
so the number of negative cycles of $g$ is even. If it is $0$ or $2$, then $g$ is a parabolic quasi-Coxeter element by Lemma~\ref{lem:cl-d-parabolic}, and the assertion is \cite[Theorem~18]{DL}. We may therefore assume that $g$ has at least four negative cycles. In particular we have $c \ge 4$.

Index the cycles so that $C_2$ and $C_3$ are negative, and put $V_i := \operatorname{supp}(C_i)$, so that $[n] = V_1 \sqcup \dots \sqcup V_c$. Fix once and for all representatives $r_i \in V_i$ and reduced transposition factorizations $T_i$ of $C_i$, and set $e := (r_1\,r_2)$ and $f_i := (r_1\,r_i)$ for $3 \le i \le c$. Let $m$ be the length of the given full reflection factorization and let
\[
  h := \tfrac12\,(m - n - c + 2),
\]
which by Lemma~\ref{lem:cl-projection} is a non-negative integer depending only on $m$, $n$ and $\pi$. All choices made so far depend only on $g$ and $m$, not on the tuples.

Call a tuple $\underline{t}$ of long reflections \emph{in normal position} if $\rho(\underline{t})$
is the tuple \eqref{eq:nf} of Lemma~\ref{lem:cl-projection}, i.e.\ if
\begin{equation}\label{eq:normalpos}
  \underline{t} = \bigl(
    \underbrace{t^{a_1}_{r_1r_2},\dots,t^{a_{2h+2}}_{r_1r_2}}_{e\text{-lift}},\;
    \underbrace{t^{b_3}_{r_1r_3},t^{b'_3}_{r_1r_3},\dots,t^{b_c}_{r_1r_c},t^{b'_c}_{r_1r_c}}_{f-\text{lift}},\;
    \underline{U}_1,\dots,\underline{U}_c
  \bigr),
\end{equation}
where $\underline{U}_i$ is a tuple of $|V_i| - 1$ long reflections whose underlying
transpositions are those of $T_i$, in the order prescribed by $T_i$. We call the first two blocks of \eqref{eq:normalpos} the \emph{bridge} and the remaining blocks the \emph{tree part}, and we call 
$$(a_1, \dots, a_{2h+2}, b_3, b'_3, \dots, b_c, b'_c)$$ together with the labels occurring in the
$\underline{U}_i$ the \emph{label vector} of $\underline{t}$. A tuple in normal position is determined by its label vector.

Now let $\underline{t}$ be any full reflection factorization of $g$ in $W_{D_n}$ of length $m$. Its entries are long reflections, so $\rho(\underline{t})$ is a transposition factorization of $\pi$, which generates $\mathfrak{S}_n$, because $\rho(W_{D_n}) = \mathfrak{S}_n$ and $\underline{t}$ generates $W_{D_n}$. By Lemma~\ref{lem:cl-projection} there is a braid $\beta$ such that $\beta \cdot \rho(\underline{t})$ equals the factorization in \eqref{eq:nf}, and by \eqref{eq:equivariance} the tuple $\beta \cdot \underline{t}$ is in normal position. Replacing $\underline{t}$ by $\beta \cdot \underline{t}$, we may and do assume from now on that $\underline{t}$ is in normal
position.

For $\underline{t}$ as in \eqref{eq:normalpos} put
\[
  q_e := |\{\,\mu : a_\mu = 1\,\}| \in \mathbb Z_{\ge 0},
  \quad
  d_2 := \sum_{\mu=1}^{2h+2} a_\mu \in \mathbb F_2,
  \quad
  d_i := b_i + b'_i \in \mathbb F_2 ~ (3 \le i \le c),
\]
so that $d_2 \equiv q_e \pmod 2$. We will show that $d_2,\dots,d_c$ are determined by $g$.

Let $D$ denote the product of the bridge and $U$ that of the tree part, so that $g = DU$.
Grouping the $e$-lifts into $h+1$ consecutive pairs and applying \eqref{eq:cl-pair-product} to each pair gives
\[
  t^{a_1}_{r_1r_2}\cdots t^{a_{2h+2}}_{r_1r_2}
  = \prod_{s=1}^{h+1} (\delta_{r_1}\delta_{r_2})^{a_{2s-1}+a_{2s}}
  = (\delta_{r_1}\delta_{r_2})^{q_e}
  = (\delta_{r_1}\delta_{r_2})^{d_2}.
\]
Identity \eqref{eq:cl-pair-product} applied to each pair in the $f$-lifts gives $t^{b_i}_{r_1r_i}t^{b'_i}_{r_1r_i} = (\delta_{r_1}\delta_{r_i})^{d_i}$.
Since $\delta_{r_1}\delta_{r_i} = \delta_{e_{r_1}+e_{r_i}}$ and $\delta_u\delta_w =
\delta_{u+w}$, we conclude
\begin{equation}\label{eq:D}
  D = \prod_{i=2}^{c}
       (\delta_{r_1}\delta_{r_i})^{d_i} = \delta_{u}, \qquad
  u = \sum_{i=2}^{c} d_i\,(e_{r_1} + e_{r_i})
    = \Bigl(\sum_{i=2}^{c} d_i\Bigr) e_{r_1} + \sum_{i=2}^{c} d_i\, e_{r_i}.
\end{equation}
In particular $D$ is an involution, so $D^{-1} = D$ and $U = Dg$.

Write $U_i$ for the product of $\underline{U}_i$. We have $U_i \in W_{B_{V_i}}$ with $\rho(U_i) = C_i$. Since $p_{V_i}$ vanishes on every long reflection, we have $p_{V_i}(U_i) = 0$.
As the $V_i$ are pairwise disjoint, $U = U_1 \cdots U_c = (w,\pi)$ where $w|_{V_i}$ is the sign
vector of $U_i$, thus
\begin{equation}\label{eq:Uparity}
  \sum_{j \in V_i} w_j = 0 \qquad (1 \le i \le c).
\end{equation}
From $g = DU = (u,1)(w,\pi) = (u + w, \pi)$ we obtain $v = u + w$, hence
\[
  p(C_i) = \sum_{j \in V_i} v_j \stackrel{\eqref{eq:Uparity}}{=} \sum_{j \in V_i} u_j .
\]
By \eqref{eq:D}, for $i \ge 2$ the only coordinate of $u$ inside $V_i$ is the one at $r_i$,
whereas for $i = 1$ it is the one at $r_1$. Therefore
\begin{equation}\label{eq:di}
  d_i = p(C_i) \quad (2 \le i \le c),
  \qquad\text{and}\qquad
  \sum_{i=2}^{c} d_i \stackrel{\eqref{eq:totalsignparity}}{=} p(C_1).
\end{equation}
In particular $d_2 = p(C_2) = 1$, so $q_e$ is odd, and $d_3 = p(C_3) = 1$, so the pair lifting $(f_3,f_3)$ has distinct labels and product $\delta_{r_1}\delta_{r_3} \ne 1$ by \eqref{eq:cl-pair-product}.

\smallskip
\noindent We note the following two things:
\begin{itemize}
    \item[(i)] Two long reflections with the same underlying transposition commute. Therefore, by using Hurwitz moves, the entries of the $e$-lift may be permuted arbitrarily, while any pair lifting $(f_i,f_i)$ may be interchanged. All other entries remain unchanged. 
    \item[(ii)] If $A= (t^a_{ij}, t^a_{ij})$ is a pair of equal entries of $\underline{t}$ and $B$ a disjoint block of consecutive entries with product $\delta_w$, then by Lemma~\ref{lem:cl-pair} there are Hurwitz moves which replace $A$ by $(t^{a+w_i+w_j}_{ij}, t^{a+w_i+w_j}_{ij})$ and leaves all other entries (including $B$) unchanged.
\end{itemize}
Both operations (i) and (ii) preserve the underlying transpositions of all entries, hence preserve normal position.

We claim that $\underline{t}$ can be transformed into a tuple in normal position with the same pairs in the $f$-lift and the same tree part, but whose $e$-lift has label vector $(0,1,0,\dots,0)$.

Suppose $q_e \ge 3$. By (i) we can permute the $e$-lift so that two entries with label $1$ are adjacent. As they are equal, (ii) applies with $A$ this pair and $B$ the pair $(t^{b_3}_{r_1r_3},t^{b'_3}_{r_1r_3})$ lifting $(f_3,f_3)$, whose product is $\delta_w$ with $w = e_{r_1} + e_{r_3}$. 
So both labels in the $e$-lift change from $1$ to $0$ and $q_e$ decreases by $2$. As $q_e$ is odd, we can repeat this process until $q_e = 1$. Furthermore, using (i), we can put the unique label-$1$ entry into the second position.

We claim the tuple may be further transformed so that, in addition, the pair $(t^{b_3}_{r_1r_i},t^{b'_3}_{r_1r_i})$ lifting $(f_i,f_i)$ has label vector $(0,d_i)$ for $3 \le i \le c$, the $e$-lift being unchanged.

We fix $i$. If $d_i = 1$, the two labels are $0$ and $1$, and by (i) we may interchange them if necessary to obtain $(0,1)$. If $d_i = 0$, the two labels are equal. If they are $(1,1)$, apply (ii) with $A$ this pair and $B$ the first two entries of the $e$-lift, which have labels $0$ and $1$ and hence have product $t^0_{r_1r_2}t^1_{r_1r_2} = \delta_w$ with $w = e_{r_1}+e_{r_2}$ by \eqref{eq:cl-pair-product}. Since $r_i \ne r_2$, we have $w_{r_1} + w_{r_i} = 1$, so both labels change from $1$ to $0$, giving $(0,0)$. In either case the $e$-lift and the other pairs in the $f$-lift remain unchanged, so the pairs may be treated one after the other. 

We have observed at the beginning of the proof, that the choices of $r_i$, $T_i$ and $h$ only depend on $g$ and $m$ and up to this point, the bridge has labels
\[
  (a_1,\dots,a_{2h+2}) = (0,1,0,\dots,0), \qquad (b_i, b'_i) = (0, d_i) \ \ (3 \le i \le c),
\]
which by \eqref{eq:di} therefore also depend only on $g$ and $m$. Hence so does $D$ by \eqref{eq:D}, thus also $U = Dg$. By \eqref{eq:di} the sign vector $w$ of $U = (w,\pi)$ satisfies $\sum_{j \in V_i} w_j = p(C_i) + d_i = 0$ for $2 \le i \le c$ and $\sum_{j \in V_1} w_j = p(C_1) + \sum_{i \ge 2} d_i = 0$. Since $U = U_1 \cdots U_c$ with $U_i \in W_{B_{V_i}}$ and the $V_i$ are disjoint, each $U_i$ is the restriction of $U$ to $V_i$ and is thus determined by $g$ and $m$. Moreover $U_i$ has underlying permutation $C_i$, support in $V_i$ and signed parity $0$. Applying Lemma~\ref{lem:cl-tree} to the cycle $C_i$, the reduced factorization $T_i$ and the element $U_i$ therefore determines the labels of $\underline{U}_i$ uniquely.

All in all we have shown that every full reflection factorization of $g \in W_{D_n}$ of length $m$ is Hurwitz equivalent to the same tuple, which only depends on $g$ and $m$ as well as the choices of the $r_i$ and $T_i$.
\end{proof}

\subsection{Property (F) for type $B$}

Given a long reflection $t_{ij}^a \in W_{B_n}$, we call $(i \, j)$ the \emph{edge} of the reflection.

\begin{lemma}
\label{lem:cl-short}
Let $\underline{t} = (t_1,\dots,t_m)$ be a tuple of reflections of $W_{B_n}$, let $p, q \in \{1, \dots, m\}$, $p \neq q$ be such that $t_p = \delta_i$ is short, $t_q = t^{a}_{ij}$ is long, 
and put $w := e_i + e_j$. Then
$\underline{t} \sim \underline{t}\,'$, where $\underline{t}\,'$ is given entrywise by
\[
  t'_k :=
  \begin{cases}
    \delta_j, & k = p,\\[2pt]
    t^{\,a+1}_{ij}, & k = q,\\[2pt]
    \delta_w\, t_k\, \delta_w, & \min(p,q) < k < \max(p,q),\\[2pt]
    t_k, & \text{otherwise.}
  \end{cases}
\]
Moreover:
\begin{enumerate}
\item[(a)] $\rho(t'_k) = \rho(t_k)$ for every $k$; in particular $t'_k$ is short if and only
if $t_k$ is short, and the long entries of $\underline{t}\,'$ carry, in the same positions and
the same order, the same edges as those of $\underline{t}$.
\item[(b)] $t'_k = t_k$ for every short entry $t_k$ with $k \ne p$; the entries that actually
change are $t_p$, $t_q$, and those long entries $t_k$ with $\min(p,q) < k < \max(p,q)$ and
$w_{k'} + w_{k''} = 1$, where $\{k',k''\}$ is the edge of $t_k$, and these change only in
their label.
\end{enumerate}
\end{lemma}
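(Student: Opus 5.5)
The plan is to move the short entry $\delta_i$ next to the long entry $t_q = t^a_{ij}$ by Hurwitz moves, perform a local two-strand computation there, and then move the resulting short reflection back to position $p$. The cumulative effect on the entries strictly between $p$ and $q$ will be conjugation by $\delta_i$ on the way in and by $\delta_j$ on the way out, i.e.\ conjugation by $\delta_i\delta_j = \delta_w$. Throughout, $\delta_i$, $\delta_j$ and $\delta_w$ are involutions, so the Hurwitz conjugations $x^{-1}yx$ are just $xyx$.

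\emph{Local identities.} Since $\delta_i = (e_i,1)$, \eqref{eq:cl-diagonal-conjugation} gives $\delta_i t^a_{ij}\delta_i = t^{a+1}_{ij}$. A direct computation in $\mathbb F_2^n\rtimes\mathfrak S_n$ gives $t^{b}_{ij}\delta_i t^{b}_{ij} = \delta_j$ for every $b$, because conjugation by $t^b_{ij}$ acts on $\mathbb F_2^n$ by swapping coordinates $i$ and $j$. From these two identities I get the following two-strand moves.
\begin{itemize}
\item[$\bullet$] If $p<q$, applying $\sigma^{-1}$ twice to the adjacent pair $(\delta_i,t^a_{ij})$ gives first $(t^{a+1}_{ij},\delta_i)$ and then $(\delta_j,t^{a+1}_{ij})$.
\item[$\bullet$] If $p>q$, applying $\sigma$ twice to $(t^a_{ij},\delta_i)$ gives first $(\delta_i,t^{a+1}_{ij})$ and then $(t^{a+1}_{ij},\delta_j)$.
\end{itemize}

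\emph{Transport of the short entry.} Take $p<q$; the case $p>q$ is the mirror image. I move $\delta_i$ rightwards from position $p$ to position $q-1$ using $\sigma_k^{-1}$, since $\sigma_k^{-1}(\delta_i,x) = (\delta_i x\delta_i,\delta_i)$. This keeps the moving entry equal to $\delta_i$ and replaces each passed entry $x$ by $\delta_i x\delta_i$. Next I apply the local move above to positions $q-1,q$. Then I move $\delta_j$ back leftwards to position $p$ using $\sigma_k$, since $\sigma_k(y,\delta_j) = (\delta_j,\delta_j y\delta_j)$.

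The net effect is as follows. Each intermediate entry $t_k$ becomes $\delta_j\delta_i t_k\delta_i\delta_j = \delta_w t_k\delta_w$, and it returns to its original position. Position $p$ now holds $\delta_j$ and position $q$ holds $t^{a+1}_{ij}$. All entries outside $[\min(p,q),\max(p,q)]$ are untouched. For $p>q$ the same argument works with $\sigma$ and $\sigma^{-1}$ interchanged.

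\emph{Parts (a) and (b).} Conjugation by the diagonal element $\delta_w=(w,1)$ does not change the underlying permutation. Hence $\rho(t'_k)=\rho(t_k)$ for every $k$, and this also holds at positions $p$ and $q$ by inspection. This gives (a). Diagonal elements commute with one another, so every short $t_k$ with $k\ne p$ is fixed. By \eqref{eq:cl-diagonal-conjugation}, an intermediate long entry $t^b_{k'k''}$ becomes $t^{b+w_{k'}+w_{k''}}_{k'k''}$, so it changes exactly when $w_{k'}+w_{k''}=1$, and then only in its label. This gives (b).

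The only real obstacle is keeping the bookkeeping consistent. One has to make sure the direction of each braid generator keeps the moving entry literally equal to $\delta_i$ (respectively $\delta_j$), rather than the generator acting on the passed entries. One also has to check that the two conjugations compose to conjugation by $\delta_w$, and not by something depending on the order of the moves.
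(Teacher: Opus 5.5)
Your proposal is correct and follows essentially the same route as the paper. You transport $\delta_i$ next to $t^a_{ij}$, which conjugates the intermediate entries by $\delta_i$. You then perform a two-strand move and transport $\delta_j$ back, which conjugates them by $\delta_j$, so the net effect is conjugation by $\delta_w$; parts (a) and (b) are argued just as in the paper. The only cosmetic difference is that your local move for $p<q$ is $\sigma_{q-1}^{-2}$ where the paper uses $\sigma_{q-1}^{2}$, and for $p>q$ the roles of $\sigma$ and $\sigma^{-1}$ are likewise swapped. Both choices produce the same adjacent pair, $(\delta_j,t^{a+1}_{ij})$ for $p<q$ and $(t^{a+1}_{ij},\delta_j)$ for $p>q$.
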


\begin{proof}
We shall use the following computations in $W_{B_n}$, each of them direct consequences of \eqref{eq:cl-pair-product} and \eqref{eq:cl-diagonal-conjugation}.

\begin{itemize}
    \item[(C1)] For $u \in \mathbb F_2^n$ and $(v,\pi) \in W_{B_n}$ we have $(v,\pi)^{-1}\delta_u(v,\pi) = \delta_{\pi^{-1}(u)}$. In particular
$t^{a}_{ij}\,\delta_i\,t^{a}_{ij} = \delta_j$.

\item[(C2)] By identity~(2) we have $\delta_u\, t^{b}_{kl}\, \delta_u = t^{\,b + u_k + u_l}_{kl}$. Thus conjugation by $\delta_u$ preserves the edge of a long reflection and changes at most its label.

\item[(C3)] By (C1) and (C2) we have the following Hurwitz equivalences.
\[
  (\delta_i,\,t^{a}_{ij}) \;\sim\; (t^{a}_{ij},\,\delta_j)
    \;\sim\; (\delta_j,\,t^{\,a+1}_{ij}),
  \qquad
  (t^{a}_{ij},\,\delta_i) \;\sim\; (\delta_j,\,t^{a}_{ij})
    \;\sim\; (t^{\,a+1}_{ij},\,\delta_j).
\]
Indeed, in both cases the second equivalence is (C2) with $u = e_j$, which shifts the label by $(e_j)_i + (e_j)_j = 1$.
\end{itemize}

\smallskip
We first consider the case $p < q$. Apply to $\underline{t}$ the braid
\[
  \beta := \bigl(\sigma_p\sigma_{p+1}\cdots\sigma_{q-2}\bigr)
           \;\cdot\; (\sigma_{q-1}\sigma_{q-1})
           \;\cdot\; \bigl(\sigma_{q-2}^{-1}\cdots\sigma_{p+1}^{-1}\sigma_p^{-1}\bigr),
\]
where the first and third blocks are empty if $q = p+1$. The third block moves $\delta_i$ from position $p$ to position $q-1$. The entries originally at positions $p+1,\dots,q-1$ are each conjugated by $\delta_i$. The tuple now has $(\delta_i, t^{a}_{ij})$ at positions $q-1,q$, so applying $\sigma_{q-1}\sigma_{q-1}$ yields $(\delta_j,\,t^{\,a+1}_{ij})$ there by (C3). Applying finally the first block of $\beta$, moves $\delta_j$ from position $q-1$ back to position $p$, returning the intermediate entries to their original positions $p+1,\dots,q-1$ and conjugating each of them by $\delta_j$.

Altogether, position $p$ carries $\delta_j$, position $q$ carries $t^{\,a+1}_{ij}$, every
entry outside $\{p,\dots,q\}$ is unchanged, and an entry $t_k$ with $p < k < q$ has been
replaced by
\[
  \delta_j\bigl(\delta_i\, t_k\, \delta_i\bigr)\delta_j
  = (\delta_j\delta_i)\, t_k\, (\delta_i\delta_j)
  = \delta_w\, t_k\, \delta_w.
\]
This is exactly $\underline{t}\,'$.

For the case $p > q$, apply the braid
\[
  \beta' := \bigl(\sigma_{p-1}^{-1}\sigma_{p-2}^{-1}\cdots\sigma_{q+1}^{-1}\bigr)
            \;\cdot\; \sigma_q^{-1}\sigma_q^{-1}
            \;\cdot\; \bigl(\sigma_{q+1}\sigma_{q+2}\cdots\sigma_{p-1}\bigr).
\]
and argue as in the case $p<q$.

It remains to prove (a) and (b). Since $\rho$ is a homomorphism and $\rho(\delta_u) = 1$ for every $u \in \mathbb{F}_2^n$, conjugation by $\delta_w$ does not change the underlying permutation of an entry. Moreover $\rho(\delta_j) = 1 = \rho(\delta_i)$ and $\rho(t^{\,a+1}_{ij}) = (i\,j) = \rho(t^{a}_{ij})$. Hence $\rho(t'_k) = \rho(t_k)$ for all $k$. As a reflection of $W_{B_n}$ is short precisely when its underlying permutation is trivial, the short reflections of $\underline{t}\,'$ occupy the same positions as those of $\underline{t}$, and the long reflections keep their edges, which shows (a).

For (b), let $t_k$ be short with $k \ne p$. If $k<p$ or $k>q$, then $t'_k = t_k$ by definition. If $p < k <q$, then $t_k = \delta_l$ for some $l$, and $t'_k = \delta_w \delta_l \delta_w = \delta_l = t_k$ by (C1). Finally, a long reflection $t_k = t^{b}_{k'k''}$ with $p<k<q$ satisfies $t'_k = t^{\,b + w_{k'} + w_{k''}}_{k'k''}$ by (C2), so it changes exactly when
$w_{k'} + w_{k''} = 1$, and then only in its label.
\end{proof}

\medskip
\noindent For a tuple $\underline{t}$ of reflections of $W_{B_n}$ we call
\[
G_{\mathrm{lg}}(\underline{t}) := \bigl([n],\ \{\,\{k',k''\} : t_k = t^{b}_{k'k''}
      \text{ a long reflection  for some } b \,\}\bigr)
\]
the \emph{long graph} of $\underline{t}$.

\begin{corollary}\label{cor:relocate}
Let $\underline{t}$ be a tuple of reflections of $W_{B_n}$ with $t_p = \delta_i$ short, and let $j \in [n]$ lie in the same connected component of $G_{\mathrm{lg}}(\underline{t})$ as $i$. Then $\underline{t} \sim \underline{t}\,''$ for a tuple $\underline{t}\,''$ with $t''_p = \delta_j$, with $t''_k = t_k$ for every short entry $t_k$, $k \ne p$, and with $\rho(t''_k) = \rho(t_k)$ for every $k$.
\end{corollary}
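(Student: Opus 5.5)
The plan is induction on the length of a path from $i$ to $j$ in $G_{\mathrm{lg}}(\underline{t})$, applying Lemma~\ref{lem:cl-short} once per edge. If $j=i$, take $\underline{t}\,''=\underline{t}$. Otherwise choose a path $i=k_0,k_1,\dots,k_\ell=j$ in $G_{\mathrm{lg}}(\underline{t})$. Each edge $\{k_{s-1},k_s\}$ is the edge of some long entry $t_{q_s}$ of $\underline{t}$, and $q_s\neq p$ because $t_p$ is short. We will move the short reflection at position $p$ along the path one edge at a time.

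For the first step, apply Lemma~\ref{lem:cl-short} with this $p$, with $q=q_1$, and with $t_{q_1}=t^{a}_{k_0k_1}$. The resulting tuple $\underline{t}\,'$ has $t'_p=\delta_{k_1}$. By part~(b) of the lemma, $t'_k=t_k$ for every short entry with $k\neq p$. By part~(a), $\rho(t'_k)=\rho(t_k)$ for all $k$, so the long entries keep their positions and their edges. In particular $G_{\mathrm{lg}}(\underline{t}\,')=G_{\mathrm{lg}}(\underline{t})$.

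It follows that the remaining path $k_1,\dots,k_\ell$ still lies in the long graph of the new tuple, with each edge $\{k_{s-1},k_s\}$ still carried by the long entry at position $q_s$ (only its label may have changed). We can therefore iterate: at step $s$ apply Lemma~\ref{lem:cl-short} to the current tuple, with short entry $\delta_{k_{s-1}}$ at position $p$ and the long entry at position $q_s$. Note that the lemma is stated for $t_p=\delta_i$ and $t_q=t^a_{ij}$ with the short index equal to the first edge index. Since $t^a_{ij}=t^a_{ji}$, we orient each edge as $(k_{s-1},k_s)$ before applying it.

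After $\ell$ steps, position $p$ carries $\delta_j$. The properties ``other short entries unchanged'' and ``$\rho$ unchanged entrywise'' are preserved at each step, so they hold for the composite, and $\sim$ is transitive. This gives $\underline{t}\,''$.

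There is no real obstacle; the only point needing care is that the edges of the long graph remain available after each step. This is exactly what part~(a) of Lemma~\ref{lem:cl-short} guarantees.
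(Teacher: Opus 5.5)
Your proposal is correct and follows essentially the same argument as the paper. You choose a path $i=k_0,\dots,k_r=j$ in $G_{\mathrm{lg}}(\underline{t})$ whose edges are carried by long entries at positions $q_s\neq p$, and apply Lemma~\ref{lem:cl-short} to positions $p$ and $q_s$ in turn, using part~(a) to keep each edge available at $q_s$ and part~(b) to keep all other short entries fixed.
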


\begin{proof}
Choose a path $i = k_0, k_1, \dots, k_r = j$ in $G_{\mathrm{lg}}(\underline{t})$ and, for each $s$, a position $q_s$ whose entry is long reflection with edge $\{k_{s-1},k_s\}$. Note that $q_s \ne p$, as $t_p$ is short. We apply Lemma~\ref{lem:cl-short} $r$ times: to positions $p$ and $q_1$, then to positions $p$ and $q_2$, and so on, and finally to positions $p$ and $q_r$. Note that by part~(a) of the Lemma~\ref{lem:cl-short}, each application leaves the position and the edge of every long reflection unchanged, so after $s-1$ steps the position $q_{s}$ still carries a long reflection with edge $\{k_{s-1},k_s\}$, while the entry at position $p$ is $\delta_{k_{s-1}}$ by construction. After $r$ steps the entry at position $p$ is $\delta_{k_r} = \delta_j$. By part~(b) of Lemma~\ref{lem:cl-short} every short reflection other than the one at position $p$ is unchanged in each step, and by part~(a) the underlying permutations of all entries are unchanged in each step.
\end{proof}

\begin{lemma}[{\cite[Corollary~3.16]{DLM}}]\label{lem:parabolicB}
An element of $W_{B_n}$ is a parabolic quasi-Coxeter element if and only if at most one of its cycles is negative.
\end{lemma}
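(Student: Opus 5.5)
Although this statement is quoted from \cite[Corollary~3.16]{DLM}, it can be proved directly in the signed-permutation model. The tool is the reflection length formula in $W_{B_n}$:
\[
  \ell_T\bigl((v,\pi)\bigr) = n - \#\{\text{positive cycles of } \pi\}.
\]
I would first prove this formula. For the lower bound, track how multiplying by a single reflection changes the number of positive cycles. A long reflection $t^a_{ij}$ merges or splits cycles, and Lemma~\ref{lem:cl-tree} together with the parity bookkeeping from the proof of Theorem~\ref{thm:cl-d-full} shows that the number of positive cycles changes by at most one. A short reflection $\delta_i$ flips the parity of exactly one cycle. For the upper bound, use the construction described in the third paragraph.

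\emph{Necessity.} Let $(t_1,\dots,t_m)$ be a reduced factorization of $g=(v,\pi)$ generating a parabolic subgroup $P$. The parabolic subgroups of $W_{B_n}$ are, up to conjugation by a diagonal element $\delta_u$, of the form $W_{B_{V_0}}\times\prod_{k\ge1}\mathfrak S_{V_k}$ for a set partition $[n]=V_0\sqcup V_1\sqcup\cdots$. Conjugating by $\delta_u$ changes neither the cycle type nor the parities $p(C)$ of the cycles, since $p_V$ is invariant. Hence we may assume $P$ has this form.

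\begin{itemize}
\item On each type-$A$ block $V_k$, the factors lie in the diagonal-free copy of $\mathfrak S_{V_k}$. So $g$ restricted to $V_k$ has only positive cycles, and that restriction is a single cycle because the factorization is reduced and generating (Lemma~\ref{lem:cl-projection} with $h=0$).
\item On $V_0$, write $s$ for the number of short factors in the $V_0$-part. The total sign $\sum_{j\in V_0}v_j$ equals $s \bmod 2$.
\item Generation of $W_{B_{V_0}}$ forces $G_{\mathrm{lg}}$ restricted to $V_0$ to be connected, since long reflections are the only reflections mixing coordinates. It also forces $s\ge1$.
\end{itemize}

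Reducedness gives that the $V_0$-part has length $\ell_T(g|_{V_0})\le |V_0|$. A connected long graph needs at least $|V_0|-1$ long factors, so $s=1$ and the $V_0$-part has length exactly $|V_0|$. By the length formula, $g|_{V_0}$ then has no positive cycles. Since its total parity is $s=1$ and the long graph on $V_0$ is a tree, $g|_{V_0}$ is a single cycle, and that cycle is negative. Altogether $g$ has at most one negative cycle.

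\emph{Sufficiency.} Suppose $g$ has at most one negative cycle, say $C_0$ with support $V_0$ (possibly absent).
\begin{itemize}
\item For each positive cycle $C_k$ with support $V_k$, choose a reduced transposition factorization $T_k$. Lemma~\ref{lem:cl-tree} provides a unique labelling whose product is the restriction of $g$ to $V_k$. This uses $|V_k|-1$ long reflections generating a $\delta_u$-conjugate of $\mathfrak S_{V_k}$, which is a parabolic subgroup of $W_{B_{V_k}}$.
\item For $C_0$, first multiply by $\delta_{r}$ for some $r\in V_0$ to make the cycle positive. Factor the result by Lemma~\ref{lem:cl-tree}, and prepend $\delta_r$. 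This gives $|V_0|$ reflections whose long graph is a spanning tree of $V_0$ and which include one short reflection, so they generate $W_{B_{V_0}}$.
\end{itemize}
The blocks have disjoint supports, so the subgroups commute and their product is a parabolic subgroup, again up to diagonal conjugation. The total length is $n-\#\{\text{positive cycles}\}=\ell_T(g)$, so the factorization is reduced.

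The main obstacle is the length formula, specifically its lower bound. I would prove it by induction on the number of factors, using the cycle-merging and cycle-splitting parity analysis above. The identification of parabolic subgroups up to diagonal conjugation is standard.
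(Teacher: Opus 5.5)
Your argument is correct, but it does not follow the paper, which gives no proof of this statement and simply imports it from \cite[Corollary~3.16]{DLM}.

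\textbf{How your route differs.} You prove it directly in the signed-permutation model using two ingredients:
\begin{itemize}
\item the reflection length formula $\ell_T((v,\pi)) = n-\#\{\text{positive cycles}\}$;
\item the description of parabolic subgroups as $\delta_u$-conjugates of $W_{B_{V_0}}\times\prod_k\mathfrak S_{V_k}$.
\end{itemize}
Necessity then becomes an edge count on the long graph: exactly one short factor and a spanning tree on $V_0$, and a single positive cycle on each type-$A$ block. Sufficiency follows from Lemma~\ref{lem:cl-tree}, together with the fact (from its proof, via surjectivity of $\partial$) that every labelling of a tree is the entrywise $\delta_u$-conjugate of the unlabelled one. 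The various $u$'s patch into a single diagonal element because the supports are disjoint. Your approach makes the paper independent of \cite{DLM} at this point, using only tools already developed there. The citation is shorter and treats type $B$ alongside the type-$D$ analogue (Lemma~\ref{lem:cl-d-parabolic}).

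\textbf{Points to tidy.}
\begin{itemize}
\item The lower bound of the length formula does not come from Lemma~\ref{lem:cl-tree}. It comes from the merge/split parity rule: merging adds parities, splitting distributes the parity, and a short reflection flips exactly one cycle. So each reflection changes the number of positive cycles by exactly one.
\item The upper bound is needed for elements with arbitrarily many negative cycles, because you use it for $g|_{V_0}$ before knowing it is a single cycle. This only requires applying your $C_0$ recipe to every negative cycle.
\item Lemma~\ref{lem:cl-projection} assumes $c\ge 2$, and $h=0$ is its output, not an input. The simpler count suffices: a connected graph on $V_k$ needs $|V_k|-1$ edges, while $\ell_T(g|_{V_k})=|V_k|-c_k$. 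This forces $c_k=1$ directly.
\end{itemize}
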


\begin{theorem}
\label{thm:cl-b-full}
Let $n \ge 2$ and $g \in W_{B_n}$. Two full reflection factorizations of $g$ in $W_{B_n}$ with the same number of short reflections and the same number of long reflections are Hurwitz equivalent.
\end{theorem}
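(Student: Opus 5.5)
The approach mirrors the proof of Theorem~\ref{thm:cl-d-full}: push every full factorization into a canonical form that depends only on $g$, the number $s$ of short entries and the number $\ell$ of long entries. Two classes are easy to dispose of first. If $g$ has at most one negative cycle, it is parabolic quasi-Coxeter by Lemma~\ref{lem:parabolicB}, and the claim is \cite[Theorem~18]{DL}. Since the two conjugacy classes of reflections of $W_{B_n}$ are the short and the long ones, equal class counts means equal $(s,\ell)$. So the remaining case is $g=(v,\pi)$ with at least two negative cycles. Fullness forces $s\ge1$ and forces the long graph $G_{\mathrm{lg}}(\underline{t})$ to be connected, since $\rho(\underline{t})$ must generate $\mathfrak S_n$ and the short entries project to $1$.

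\emph{Step 1: gather the short entries.} A short entry $\delta_i$ commutes past long entries only up to conjugation, but a Hurwitz move $\sigma_k^{\pm1}$ moving a short entry across a long $t^a_{ij}$ replaces $\delta_i$ by $\delta_j$ (or leaves it, or changes a label). It therefore preserves the short/long positional pattern after rearrangement. I will move all $s$ short entries to the front. Using Corollary~\ref{cor:relocate}, which is applicable because the long graph is connected, I then relocate every short entry to $\delta_{r_1}$. Equal adjacent shorts commute, so the prefix becomes $(\delta_{r_1},\dots,\delta_{r_1})$. The remaining suffix $\underline{t}_{\mathrm{lg}}$ consists of $\ell$ long reflections, with product $\delta_{r_1}^{s}g$, and its projection generates $\mathfrak S_n$.

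\emph{Step 2: normalize the long suffix.} The suffix lies in $W_{D_n}$ and generates $W_{D_n}$ (for $n\ge3$ its projection is full in $\mathfrak S_n$ and its edges form a connected graph, so it is full; $n=2$ is handled directly or by Berger's dihedral result \cite{Berger}). Its product $g':=\delta_{r_1}^{s}g$ is determined by $g$ and $s$. Theorem~\ref{thm:cl-d-full} gives that any two such suffixes of the same length $\ell$ are Hurwitz equivalent. Acting only on the suffix, both factorizations become the same tuple. The one subtlety is that $g'$ must lie in $W_{D_n}$, i.e. $\varepsilon(g)=s \bmod 2$; this holds automatically because $\varepsilon$ is a homomorphism and $\varepsilon(\text{long})=0$.

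The main obstacle is Step 1. Corollary~\ref{cor:relocate} relocates a short entry but requires the long graph to stay connected, and it quietly changes labels of intervening long entries, which is harmless. The real issue is physically moving shorts to the front while keeping them short. Moving a short entry left past a long entry via $\sigma^{-1}$ yields $(t\delta t, t)=(\delta_{j},t)$, which is still short, so this works without changing the long edges. I expect the work to lie in checking that fullness of the long suffix holds at the end, i.e. that the long graph is connected and, when $n\ge3$, that the $D_n$-fullness criterion is exactly connectivity of the long graph. That criterion should follow from \cite[Lemma~3.10.1]{GR01} together with the fact that a connected edge set of $t^a_{ij}$'s containing a cycle of odd label-sum generates $W_{D_n}$. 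I suspect one may need a cycle in the long graph, or may need to use a pair of shorts (which Lemma~\ref{lem:cl-short} allows via label flips) to adjust labels so that the long entries generate all of $W_{D_n}$ rather than a conjugate of $\mathfrak S_n$. If that fails, I would first try to convert two short entries $\delta_{r_1},\delta_{r_1}$ into a long pair, which is not always possible. In that case I would instead use Theorem~\ref{thm:cl-d-full}'s normal-form argument directly: run Lemma~\ref{lem:cl-projection} on the long part and treat the bridge labels, using the shorts and Lemma~\ref{lem:cl-short} to flip labels freely, so that all labels become determined as in Lemma~\ref{lem:cl-tree}.
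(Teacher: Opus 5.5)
\textbf{There is a genuine gap at the key step.} Your plan to reduce type $B$ to Theorem~\ref{thm:cl-d-full} differs from the paper's route, and Step~1 is fine. Step~2, however, rests on a claim you do not justify. You assert that the long suffix generates $W_{D_n}$ because its projection generates $\mathfrak S_n$ and its long graph is connected. That criterion is false. A tuple of reflections $t^0_{ij}$ whose edges form a connected graph generates only $\{(0,\sigma)\}\cong\mathfrak S_n$, a reflection subgroup of type $A_{n-1}$. The same is true for every $\delta_u$-conjugate of such a tuple. If the suffix generated such a subgroup, Theorem~\ref{thm:cl-d-full} would not apply, and nothing in your proposal rules this out. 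Your last paragraph senses the problem but looks for the cure in the wrong place: no label adjustment with shorts is needed. The fallback you sketch is essentially the paper's argument, but you leave it uncarried out.

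\textbf{The missing idea} is to use the hypothesis that $g$ has at least two negative cycles.
\begin{itemize}
\item Multiplying $g=(v,\pi)$ by $\delta_{r_1}^{s}$ changes only the parity of the cycle containing $r_1$. Hence $g'=\delta_{r_1}^{s}g$ still has a negative cycle.
\item Let $H$ be the group generated by the long suffix, and choose a spanning tree of the long graph. Since $\partial$ is surjective for a tree, as in the proof of Lemma~\ref{lem:cl-tree}, there is a vector $u$ such that conjugation by $\delta_u$ turns one entry on each tree edge into a $t^0_{ij}$. These $t^0_{ij}$ generate $\{(0,\sigma)\}$.
\item If some conjugated suffix entry is $t^1_{kl}$, then $H^{\delta_u}$ contains $t^0_{kl}t^1_{kl}=\delta_{e_k+e_l}$. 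Its conjugates under $\rho(H)=\mathfrak S_n$ generate every $\delta_w$ with $w$ of even weight, so $H=W_{D_n}$.
\item Otherwise $H=\delta_u\{(0,\sigma)\}\delta_u=\{(u+\sigma(u),\sigma)\}$. Every element of this group has only positive cycles, which contradicts $g'\in H$.
\end{itemize}

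\textbf{With this lemma your route is complete.} You still need Berger's result for $n=2$, since Theorem~\ref{thm:cl-d-full} requires $n\ge3$. Your argument is then shorter than the paper's because it reuses the type-$D$ theorem wholesale.

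The paper never needs $D_n$-fullness of the long part. It proceeds as follows:
\begin{itemize}
\item it runs Lemma~\ref{lem:cl-projection} on the long suffix;
\item it normalizes the bridge labels directly via Lemma~\ref{lem:cl-pair}, using the single short entry $\delta_{r_1}$ as the block $B$, which flips labels of edges at $r_1$;
\item it determines the tree labels by Lemma~\ref{lem:cl-tree}.
\end{itemize}
This treats $n=2$ uniformly.
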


\begin{proof}
Let $n \ge 2$.
We will show that two full reflection factorizations of $g \in W_{B_n}$ with the same number of short reflections and the same number of long reflections are Hurwitz equivalent by constructing a specific tuple and showing that each of the two is Hurwitz equivalent to this specific tuple. Throughout, $q$ denotes the common number of short reflections and $m$ the common number of long reflections of the two given tuples, so their common length is $q+m$.
We write $g = (v,\pi)$, let $C_1,\dots,C_c$ be the cycles of $\pi$ (fixed points
included), and put $V_i := \operatorname{supp}(C_i)$. 
Note that since $\rho(\delta_i) = 1$ and $\rho(t^a_{ij}) = (i\,j)$, a reflection of $W_{B_n}$ is short precisely when its image under $\rho$ is trivial.

The set of reflections $T_{B_n}$ of $W_{B_n}$ has exactly two conjugacy classes, one given by the short reflections, the other one given by the long reflections. So prescribing the numbers $q$ and $m$ is the same as prescribing the class-count vector. 

Let $\underline{t}$ be a full reflection factorization of $g$ with $q$ short and $m$ long reflections. All long reflections lie in the proper subgroup $W_{D_n}$, so a tuple of long reflections alone cannot generate $W_{B_n}$, hence $q \ge 1$. Moreover $\rho(\underline{t})$ generates $\rho(W_{B_n}) = \mathfrak S_n$,
so the underlying transpositions of the long reflections already generate $\mathfrak S_n$. Equivalently, by \cite[Lemma 3.10.2]{GR01}, the long graph $G_{\mathrm{lg}}(\underline{t})$ is connected.

Now, if $g$ has at most one negative cycle, then $g$ is a parabolic quasi-Coxeter element by Lemma~\ref{lem:parabolicB}. The assertion then follows by \cite[Theorem~18]{DL}.
Therefore we can assume that $g$ has at least two negative cycles. In particular $c \ge 2$. Fix once and for all representatives $r_i \in V_i$ and reduced transposition factorizations $T_i$ of $C_i$, and set $e := (r_1\,r_2)$ and $f_i := (r_1\,r_i)$ for $3 \le i \le c$. Since the long reflections of a full reflection factorization project to a length-$m$ transposition factorization of $\pi$, which generates $\mathfrak S_n$, Lemma~\ref{lem:cl-projection} gives
\[
  m = n + c - 2 + 2h, \qquad h := \tfrac12\,(m-n-c+2) \in \mathbb Z_{\ge 0}.
\]

\noindent Call a tuple $\underline{t}$ of reflections of $W_{B_n}$ with $q$ short and $m$ long reflections \emph{in normal position} if
\begin{equation}\label{eq:normalposB}
  \underline{t} = \bigl(
    \underbrace{\delta_{r_1},\dots,\delta_{r_1}}_{\text{short prefix, } q \text{ entries}};\;
    \underbrace{t^{a_1}_{r_1r_2},\dots,t^{a_{2h+2}}_{r_1r_2}}_{e\text{-lift}},\;
    \underbrace{t^{b_3}_{r_1r_3},t^{b'_3}_{r_1r_3},\dots,t^{b_c}_{r_1r_c},t^{b'_c}_{r_1r_c}}
      _{f\text{-lift}},\;
    \underline{U}_1,\dots,\underline{U}_c
  \bigr),
\end{equation}
where $\underline{U}_i$ is a tuple of $|V_i| - 1$ long reflections whose underlying transpositions are those of $T_i$, in the order prescribed by $T_i$. As in type $D_n$, we call the $e$-lift together with the $f$-lift the \emph{bridge}, the blocks $\underline{U}_i$ the \emph{tree part}, and the collection of all labels occurring in \eqref{eq:normalposB} the \emph{label vector}. The short prefix carries no labels. A tuple in normal position is determined by its label vector, since a long reflection is determined by its edge and its label.

Let $\underline{t}$ again be a full reflection factorization of $g$ in $W_{B_n}$ with $q$ short and $m$ long reflections. We bring $\underline{t}$ into normal position. To do so, we use the Hurwitz action. In particular this preserves the product, the length, and the numbers $q$ and $m$. We proceed in three moves.

(i) If a short reflection $\delta_l$ sits at position $k$ and the entry $x$ at position $k-1$ is long, then $\sigma_{k-1}(x,\delta_l)= (\delta_l, \delta_l x \delta_l)$, so the short reflection moves one position to the left and the long reflection it passes is conjugated, which preserves its edge and changes at most its label. Repeating this process, we may assume that the first $q$ entries are short and the last $m$ are long, which we call the \emph{long suffix}. The relative order within each group and the ordered list of edges of the long entries being those of $\underline{t}$.

(ii) The long suffix projects to a length-$m$ transposition factorization of $\pi$ generating $\mathfrak S_n$.
By Lemma~\ref{lem:cl-projection} there is a braid $\beta$, acting on the last $m$ strands only, such that $\beta$ applied to $\rho(\text{long suffix})$ is a factorization as given by \eqref{eq:nf}. By equivariance, applying $\beta$ to the long suffix leaves the short prefix untouched and puts the long suffix into the shape displayed in \eqref{eq:normalposB}.

(iii) As observed above, the long graph is connected. We can therefore apply Corollary~\ref{cor:relocate} to each short reflection. By doing so, we replace that short reflection by $\delta_{r_1}$, leave every other short reflection unchanged, and leave $\rho(\underline{t})$ unchanged entrywise. Hence this preserves both the position of the short prefix and the shape \eqref{eq:normalposB} of the long suffix, changing at most the labels of long reflections.

After (i)--(iii) the tuple is in normal position.

Analogous to the proof of Theorem~\ref{thm:cl-d-full}, for $\underline{t}$ as in \eqref{eq:normalposB} we put
\[
  q_e := |\{\mu : a_\mu = 1\}|,\quad
  d_2 := \sum_{\mu=1}^{2h+2} a_\mu \equiv q_e \!\!\pmod 2, \quad
  d_i := b_i + b'_i \ \ (3 \le i \le c),
\]
and let $S$, $D$, $U$ be the products of the short prefix, of the bridge and of the tree part, so that $S D U = g$. Exactly as in the proof of Theorem~\ref{thm:cl-d-full}, grouping the $e$-lift into $h+1$ pairs and applying identity~\eqref{eq:cl-pair-product} to each pair and to each pair $(t^{b_i}_{r_1r_i},t^{b'_i}_{r_1r_i})$ in the $f$-lift gives
\begin{equation}\label{eq:SD}
  D = \delta_{u},~ u = \sum_{i=2}^{c} d_i\,(e_{r_1}+e_{r_i}),
  \quad
  S = \delta_{r_1}^{\,q} = \delta_{\bar q\, e_{r_1}},~ \bar q := q \bmod 2 .
\end{equation}
Both $D$ and $S$ are involutions and they commute, so $SD = \delta_{s}$ with
$s = \bigl(\bar q + \sum_{i\ge2} d_i\bigr) e_{r_1} + \sum_{i \ge 2} d_i\, e_{r_i}$, and
$U = (SD)^{-1} g = D S g$.

We write $U_i$ for the product of $\underline{U}_i$. By construction we have $U_i \in W_{B_{V_i}}$ with $\rho(U_i) = C_i$. Since $p_{V_i}$ (see \eqref{eq:DefpV}) vanishes on every long reflection, we have $p_{V_i}(U_i) = 0$.
As the $V_i$ are pairwise disjoint,
$U = U_1\cdots U_c = (w,\pi)$ with $\sum_{j \in V_i} w_j = 0$ for every $i$. From
$g = (SD)\,U = (s + w,\pi)$ we get $v = s + w$, hence, summing over $V_i$,
\[
  p(C_i) = \sum_{j \in V_i} v_j = \sum_{j \in V_i} s_j .
\]
By \eqref{eq:SD}, for $i \ge 2$ the only coordinate of $s$ inside $V_i$ is the one at $r_i$,
and for $i = 1$ it is the one at $r_1$. Therefore
\begin{equation}\label{eq:diB}
  d_i = p(C_i) \quad (2 \le i \le c),
  \qquad\text{and}\qquad
  \bar q + \sum_{i=2}^{c} d_i = p(C_1).
\end{equation}
The second identity is no extra condition. Indeed, the total sign parity $\varepsilon \colon W_{B_n} \to \mathbb F_2$, $(v,\pi) \mapsto \sum_{j} v_j$, is a homomorphism
vanishing on every long reflection and taking the value $1$ on every short reflection, so
evaluating it on $\underline{t}$ and on $g$ gives
\[
  q \;\equiv\; \sum_{i=1}^{c} p(C_i) \pmod 2 ,
\]
which together with the first part of \eqref{eq:diB} is precisely the second part.

As $q \geq 1$, the short prefix is nonempty, and we put $B := (\delta_{r_1})$ (the block consisting of its first entry) and $b := \delta_{e_{r_1}}$. Every bridge entry has an edge incident to $r_1$, so identity~\eqref{eq:cl-diagonal-conjugation} yields
\[
  \bigl(t^{a}_{r_1 r_i}\bigr)^{\,b} = t^{\,a + 1}_{r_1 r_i} \qquad (2 \le i \le c).
\]
Also note that two long reflections with the same edge commute, so adjacent entries of the $e$-lift and the two entries of the pair $(t^{b_i}_{r_1r_i},t^{b'_i}_{r_1r_i})$ in the $f$-lift may be interchanged by a Hurwitz move, all other entries remaining unchanged.

Analogous to the proof of Theorem~\ref{thm:cl-d-full} (with $B$ as chosen before), we can now permute the $e$-lift and apply Lemma~\ref{lem:cl-pair} until $q_e =d_2 \in \{0,1\}$ and the labels of the $e$-lift are given by $(0,d_2,0,\dots,0)$. Also analogous to the proof of Theorem~\ref{thm:cl-d-full} and by applying Lemma~\ref{lem:cl-pair} we can assure by suitable Hurwitz moves that the labels $(b_i, b_i')$ of the pair $(t^{b_i}_{r_1r_i},t^{b'_i}_{r_1r_i})$ in the $f$-lift are $(0,1)$ if $d_i = 1$ and $(0,0)$ if $d_i = 0$. All operations of this step preserve $\rho(\underline{t})$ entrywise and therefore the normal position.

At this point the short prefix consists of $q$ copies of $\delta_{r_1}$ and the bridge has labels
\[
  (a_1,\dots,a_{2h+2}) = (0,d_2,0,\dots,0), \qquad (b_i,b'_i) = (0,d_i)\ \ (3 \le i \le c),
\]
which by \eqref{eq:diB} depends only on $g$, $q$ and $m$. Thus $S$ and $D$ depend only on $g$, $q$ and $m$ by \eqref{eq:SD}, and hence so does $U = DSg$. As shown above, the sign vector $w$ of $U = (w,\pi)$ satisfies $\sum_{j \in V_i} w_j = 0$ for every $i$. Since $U = U_1\cdots U_c$ with $U_i \in W_{B_{V_i}}$ and the $V_i$ disjoint, each $U_i$ is the restriction of $U$ to $V_i$ and is therefore determined by $g$, $q$ and $m$, has underlying permutation $C_i$, support in $V_i$ and signed parity $0$. Applying Lemma~\ref{lem:cl-tree} to the cycle $C_i$, the fixed reduced factorization $T_i$ and the element $U_i$ determines the labels of $\underline{U}_i$ uniquely.

We have thus shown that every full reflection factorization of $g \in W_{B_n}$ consisting of $q$ short and $m$ long reflections is Hurwitz equivalent to one and the same tuple in normal position, whose labels only depend on $g$, $q$, $m$ and the choices of the $r_i$ and $T_i$. Any two full reflection factorizations of $g$ consisting of $q$ short and $m$ long reflections are therefore Hurwitz equivalent.
\end{proof}

\subsection{Proof of the main theorem}

\begin{lemma}\label{lem:reduction}
Let $W$ be a finite Coxeter group. If every irreducible reflection subgroup of $W$ has property (F), then Conjecture~\ref{conjecture} holds for $W$.
\end{lemma}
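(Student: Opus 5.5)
By the Remark following Conjecture~\ref{conjecture}, it suffices to show that every reflection subgroup $H$ of $W$ has property (F). So I fix a reflection subgroup $H$ and two reflection factorizations $\underline{t},\underline{t}'$ of the same $g\in H$. I assume both are full in $H$ and have the same multiset of $H$-conjugacy classes. The plan is to decompose $H$ into its irreducible components and to apply property (F) to each component separately. Each component is itself an irreducible reflection subgroup of $W$, so it has property (F) by hypothesis.

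First, the decomposition. Since $H$ is a finite Coxeter group, it decomposes as $H=H_1\times\cdots\times H_k$ with $H_j$ irreducible. The reflections of $H$ are the disjoint union of the reflections of the $H_j$. Reflections lying in different factors commute, and the $H$-conjugacy classes of reflections are exactly the $H_j$-conjugacy classes of reflections of the $H_j$. In particular, the multiset of classes determines, for each $j$, how many entries of $\underline{t}$ and of $\underline{t}'$ lie in $H_j$, and these numbers agree. It also gives the class counts within each $H_j$.

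Second, sorting by components. Adjacent entries from different factors commute, so a Hurwitz move swaps them without changing either entry. Hence $\underline{t}\sim(\underline{t}_1,\dots,\underline{t}_k)$, where $\underline{t}_j$ is the subsequence of entries lying in $H_j$, in their original order. The same holds for $\underline{t}'$. Next I check three properties of the blocks.
\begin{itemize}
\item Writing $g=g_1\cdots g_k$ with $g_j\in H_j$, uniqueness of this decomposition shows that the product of $\underline{t}_j$ is $g_j$. The same holds for $\underline{t}'_j$.
\item Fullness in $H$ means that the projection of $\langle\underline{t}\rangle$ onto $H_j$ is all of $H_j$. The entries outside $H_j$ project to $1$, so $\langle\underline{t}_j\rangle=H_j$. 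Thus $\underline{t}_j$ and $\underline{t}'_j$ are full in $H_j$.
\item They have the same class counts in $H_j$, by the first step.
\end{itemize}
Property (F) for $H_j$ then gives $\underline{t}_j\sim\underline{t}'_j$. Hurwitz moves acting within a block of consecutive positions leave the other blocks untouched, so $(\underline{t}_1,\dots,\underline{t}_k)\sim(\underline{t}'_1,\dots,\underline{t}'_k)$. This proves property (F) for $H$.

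I expect the main point needing care to be the fullness step. The projection $H\to H_j$ is a homomorphism, but one must check that the subgroup generated by the entries from $H_j$ alone already equals $H_j$. This holds because the projection of $\langle\underline{t}\rangle$ is generated by the projections of the generators, and only the generators lying in $H_j$ contribute nontrivially. A second point to verify is that the $H$-conjugacy classes of reflections split along the factors, so that equal multisets in $H$ give equal class counts in each $H_j$. This also follows from the product decomposition, since conjugating an element of $H_j$ by an element of $H$ only involves that element's $H_j$-component.
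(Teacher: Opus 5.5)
Your proposal is correct and follows the paper's proof essentially step for step. You decompose $H$ into irreducible components, sort both tuples into component blocks by Hurwitz moves that swap commuting adjacent entries, and apply property (F) in each block. Your explicit checks that each block is full in $H_j$ (via the projection $H\to H_j$), has product $g_j$, and has matching $H_j$-class counts fill in details the paper only states.
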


\begin{proof}
Necessity follows directly from the definition of the Hurwitz action. For sufficiency, let $\underline t, \underline t'$ be reflection factorizations of $g \in W$ with $\langle \underline t\rangle = \langle \underline t'\rangle = H$ and with the same multiset of $H$-conjugacy classes. The reflection subgroup $H$ is itself a finite Coxeter group with set of reflections $T \cap H$. Let $H = H_1 \times \dots \times H_s$ be its decomposition into irreducible components. Every reflection of $H$ lies in exactly one $H_i$. Reflections in distinct components commute, and a Hurwitz move applied to two commuting adjacent entries interchanges them without changing either entry. Hence both tuples can be sorted into blocks according to the components, leaving all entries unchanged. By hypothesis the two tuples have the same multiset of $H$-conjugacy classes, so for each $i$ the two $i$-th blocks have the same length and the same multiset of $H_i$-conjugacy classes, each of them is full in $H_i$, and both have product the $H_i$-component of $g$. Applying property (F) for $H_i$ inside each block and composing the resulting braids with the two sorting braids gives $\underline t \sim \underline t'$.
\end{proof}

\noindent The following result is well-known.
\begin{lemma}
\label{lem:cl-subgroups}
A reflection subgroup of a finite Coxeter group of type $A$, $B$ or $D$ is, as a Coxeter group, a direct product of Coxeter groups of types $A$, $B$ and $D$. A reflection subgroup of $W_{D_n}$ has only factors of types $A$ and $D$.
\end{lemma}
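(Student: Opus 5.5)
The plan is to classify reflection subgroups of $W_{B_n}$ (which contains $W_{D_n}$ and $W_{A_{n-1}}=\mathfrak S_n$) directly in the signed-permutation model by looking at orbits on coordinates. Let $H$ be generated by a set $R$ of reflections of $W_{B_n}$. Form the graph on $[n]$ whose edges are the edges $\{i,j\}$ of the long reflections $t^a_{ij}\in R$. Reflections supported on different connected components commute, and the components have disjoint supports. Hence $H$ is the direct product of the subgroups $H_V$ generated by the reflections supported in each component $V$; components with neither long nor short reflections contribute the trivial group. It therefore suffices to identify each $H_V$ with a Coxeter group of type $A$, $B$ or $D$ (allowing small-rank coincidences such as $B_1=A_1$ and $D_2=A_1\times A_1$, which still yield factors of the allowed types after a further product split). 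The type $A$ case is immediate: a reflection subgroup of $\mathfrak S_n$ is generated by transpositions and is a Young subgroup $\prod\mathfrak S_{V_i}$ by \cite[Lemma~3.10.1]{GR01}.

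For a fixed component $V$ with $|V|=k$, I would use the gauge idea from Lemma~\ref{lem:cl-tree}. Choose a spanning tree of long reflections in $R$ on $V$. By conjugating with a diagonal element $\delta_u$, $u\in\mathbb F_2^V$, we may assume all tree edges carry label $0$; the surjectivity of $\partial$ from that proof gives exactly this freedom. Conjugation by $\delta_u$ is an automorphism of $W_{B_n}$ preserving the set of reflections and the partition into long and short reflections, so the isomorphism type of $H_V$ is unchanged. After this normalization, the tree generates the copy $\{(0,\pi)\}$ of $\mathfrak S_V$. I then split into three cases:
\begin{itemize}
\item If every long reflection of $R$ on $V$ becomes $t^0_{ij}$ and $R$ has no short reflection on $V$, then $H_V\cong\mathfrak S_V$, which is of type $A_{k-1}$.
\item If some $t^1_{ij}\in R$, then $H_V$ contains $t^0_{ij}t^1_{ij}=\delta_i\delta_j$ by \eqref{eq:cl-pair-product}. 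Conjugating by $\mathfrak S_V$ gives all $\delta_a\delta_b$ with $a,b\in V$, so $H_V\supseteq W_{D_V}$.
\item If $R$ contains a short reflection $\delta_i$ with $i\in V$, then $\mathfrak S_V$-conjugation produces all $\delta_a$, so $H_V= W_{B_V}$.
\end{itemize}
Since all elements lie in $W_{B_V}$, the second case without short reflections gives $H_V= W_{D_V}$, because $W_{D_V}$ is generated by long reflections and contains every long reflection on $V$. The first case with short reflections gives $W_{B_V}$. I should also check that short reflections $\delta_i$ with $i$ an isolated vertex give $B_1\cong A_1$ factors.

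For the statement about $W_{D_n}$, note that $R$ then contains no short reflections, so only types $A$ and $D$ occur (with $D_2$, $D_3$ read as $A_1^2$ and $A_3$). Finally, the ambient types must be handled: for a general $W$ whose components are of type $A$, $B$ or $D$, a reflection subgroup is the product of its intersections with the components, since each reflection lies in one component and reflections in different components commute. Each of these intersections is a reflection subgroup of some $W_{B_k}$, and $W_{A_k}$ embeds in $W_{B_{k+1}}$ as $\mathfrak S_{k+1}$.

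The main obstacle is making the direct-product decomposition rigorous, that is, showing that the identification above is an isomorphism of Coxeter systems and not merely of groups. I would argue this as follows. The reflections of $H_V$ are exactly $T\cap H_V$, and these are precisely the reflections of the identified type-$A$, $B$ or $D$ group. Two finite reflection groups that coincide as subgroups with the same reflection set have the same Coxeter structure.
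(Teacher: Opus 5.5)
The paper gives no proof of this lemma; it only calls it well known. The usual justification would be the classification of root subsystems of $B_n$ and $D_n$.

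Your argument is a correct, self-contained substitute, carried out in the paper's own signed-permutation model. You split $H$ along the connected components of the long graph of a generating set $R$. You then normalise a spanning tree of each component to label $0$ by a diagonal conjugation; the surjectivity of $\partial$ from the proof of Lemma~\ref{lem:cl-tree} applies verbatim to any spanning tree. The resulting trichotomy $\mathfrak S_V$, $W_{D_V}$, $W_{B_V}$ is sound, and you handle the small-rank coincidences $B_1=A_1$, $D_2=A_1\times A_1$, $D_3=A_3$ correctly.

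Compared with a bare citation, your route also yields an explicit conjugacy normal form. Conjugation by $\delta_u$ with $u$ supported in $V$ fixes every reflection supported outside $V$. Hence a single diagonal element conjugates $H$ simultaneously to $\prod_V X_V$ with $X_V\in\{\mathfrak S_V,W_{D_V},W_{B_V}\}$.

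Two points should be stated explicitly.

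\emph{The reflection set of the product.} For the claim about Coxeter systems, and not just groups, you need $T\cap H=\bigsqcup_V (T\cap H_V)$. This holds because $H\le\prod_V W_{B_V}$ stabilises every component, so any long reflection $t^a_{ij}\in H$ has $i$ and $j$ in the same component. Consequently the root subsystem of $H$ is the orthogonal union of those of the $H_V$. A simple system for $H$ is then the union of simple systems of the factors, so the Coxeter system of $H$ is the product of those of the $H_V$.

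\emph{Your closing sentence.} As worded it is tautological: two groups that coincide as subgroups trivially have the same reflection set. The substantive fact you need is that the Coxeter structure of a reflection subgroup is determined by its set of reflections, equivalently by its root subsystem in the natural representation on $\mathbb R^n$. For the three factors this gives:
\begin{itemize}
\item $W_{D_V}$ has root subsystem $\{\pm e_a\pm e_b : a\ne b\in V\}$, of type $D_{|V|}$;
\item $W_{B_V}$ adds the roots $\pm e_a$, giving type $B_{|V|}$;
\item $\mathfrak S_V$ has roots $\pm(e_a-e_b)$, of type $A_{|V|-1}$.
\end{itemize}
Types are preserved under the $\delta_u$-conjugation because it permutes the reflections of $W_{B_n}$.
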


\medskip
\begin{proof}[\textbf{Proof of Theorem~\ref{thm:main}}]
Let $W = W_1 \times \dots \times W_s$ be the decomposition of $W$ into irreducible components, so that each $W_i$ is of type $A$, $B$ or $D$, and let $K$ be an irreducible reflection subgroup of $W$. Every reflection of $W$ lies in exactly one factor $W_i$. Hence $K$ is the direct product of the subgroups generated by its reflections in the individual factors, and since $K$ is irreducible, $K$ is an irreducible reflection subgroup of some $W_i$. By Lemma~\ref{lem:cl-subgroups} the group $K$ is of type $A$, $B$ or $D$. Property~(F) holds in type $A$ by Lemma~\ref{lem:cl-symmetric}, in type $B$ by Theorem~\ref{thm:cl-b-full} and in type $D$ by Theorem~\ref{thm:cl-d-full}. Now apply Lemma~\ref{lem:reduction}.
\end{proof}

\bibliography{hurwitz}

\providecommand{\bysame}{\leavevmode\hbox to3em{\hrulefill}\thinspace}
\providecommand{\MR}{\relax\ifhmode\unskip\space\fi MR }
\providecommand{\MRhref}[2]{%
  \href{http://www.ams.org/mathscinet-getitem?mr=#1}{#2}
}
\providecommand{\href}[2]{#2}
\begin{thebibliography}{10}

\bibitem{BGRW}
Barbara Baumeister, Thomas Gobet, Kieran Roberts, and Patrick Wegener, \emph{On the {Hurwitz} action in finite {Coxeter} groups}, J. Group Theory \textbf{20} (2017), no.~1, 103--131.

\bibitem{Berger}
Emily Berger, \emph{Hurwitz equivalence in dihedral groups.}, Electron. J. Comb. \textbf{18} (2011), no.~1, 16.

\bibitem{Bessis}
David Bessis, \emph{The dual braid monoid.}, Ann. Sci. {\'E}c. Norm. Sup{\'e}r. (4) \textbf{36} (2003), no.~5, 647--683 (English).

\bibitem{DL}
Theo Douvropoulos and Joel~Brewster Lewis, \emph{Hurwitz orbits on reflection factorizations of parabolic quasi-{Coxeter} elements}, Electron. J. Comb. \textbf{31} (2024), no.~1, 17.

\bibitem{DLM}
Theo Douvropoulos, Joel~Brewster Lewis, and Alejandro~H. Morales, \emph{Hurwitz numbers for reflection groups. {II}: {Parabolic} quasi-{Coxeter} elements}, J. Algebra \textbf{641} (2024), 648--715.

\bibitem{GR01}
Chris Godsil and Gordon Royle, \emph{Algebraic graph theory}, Grad. Texts Math., vol. 207, New York, NY: Springer, 2001.

\bibitem{Humphreys}
James~E. Humphreys, \emph{Reflection groups and {Coxeter} groups}, Camb. Stud. Adv. Math., vol.~29, Cambridge etc.: Cambridge University Press, 1990.

\bibitem{KL88}
Paul Kluitmann, \emph{Hurwitz action and finite quotients of braid groups}, Braids, {AMS}-{IMS}-{SIAM} {Jt}. {Summer} {Res}. {Conf}., {Santa} {Cruz}/{Calif}. 1986, {Contemp}. {Math}. 78, 299-325, 1988.

\bibitem{Lewis}
Joel~Brewster Lewis, \emph{A note on the {Hurwitz} action on reflection factorizations of {Coxeter} elements in complex reflection groups}, Electron. J. Comb. \textbf{27} (2020), no.~2, 14, Id/No p2.54.

\bibitem{LR}
Joel~Brewster Lewis and Victor Reiner, \emph{Circuits and {Hurwitz} action in finite root systems}, New York J. Math. \textbf{22} (2016), 1457--1486.

\bibitem{WY2}
Patrick Wegener and Sophiane Yahiatene, \emph{A note on non-reduced reflection factorizations of {Coxeter} elements}, Algebr. Comb. \textbf{3} (2020), no.~2, 465--469.

\bibitem{WY}
\bysame, \emph{Reflection factorizations and quasi-{Coxeter} elements}, J. Comb. Algebra \textbf{7} (2023), no.~1-2, 127--157.

\end{thebibliography}
\bibliographystyle{amsplain}

\vspace{2em}
\noindent
\begin{minipage}{\linewidth}
\raggedright
\small{Patrick Wegener}\\
\small Philipps-Universit\"at Marburg, Germany\\
\small \textit{E-mail address:} \texttt{wegenerp@staff.uni-marburg.de}
\end{minipage}

\end{document}